\numberwithin{equation}{section} \allowdisplaybreaks
\newcommand{\beq}{\begin{equation}}
\newcommand{\eeq}{\end{equation}}
\newtheorem{thm}{Theorem}[section]
\newtheorem{lma}{Lemma}[section]
\newtheorem{remark}{Remark}[section]
\newtheorem{definition}{Definition}[section]
\newtheorem{asu}{Assumption}[section]
\begin{document}
\vskip 0.2cm

\title[Singularity formation for Euler equations]{\bf Singularity formation for radially symmetric expanding wave of Compressible Euler Equations}

\author[H. Cai]{Hong Cai}
\address{Hong Cai\newline
Department of Mathematics, Qingdao University of Science and Technology, Qingdao 266061, P.R. China.}
\email{caihong19890418@163.com}

\author[G. Chen]{Geng Chen}
\address{Geng Chen\newline
Department of Mathematics, University of Kansas, Lawrence, KS 66045}
\email{gengchen@ku.edu.}

\author[T.-Y. Wang]{Tian-Yi Wang}
\address{Tian-Yi Wang \newline
	Department of Mathematics, School of Science, Wuhan University of Technology,
	Wuhan, Hubei 430070, P. R. China.}
\email{tianyiwang@whut.edu.cn; wangtianyi@amss.ac.cn}

\thanks{Corresponding author: Tian-Yi Wang}

\vskip 0.2cm

\maketitle

\begin{abstract}
In this paper, for compressible Euler equations in multiple space dimensions, we prove the break-down of classical solutions with a large class of initial data by tracking the propagation of radially symmetric expanding wave including compression. The singularity formation is corresponding to the finite time shock formation. We also provide some new global sup-norm estimates on velocity and density functions for classical solutions. The results in this paper have no restriction on the size of solutions, hence are large data results.

 \bigbreak
\noindent

{\bf \normalsize Keywords.} {Singularity formation, gradient blowup,  Compressible Euler equations, Radially symmetric solution,  Shock,Supersonic flows
	.}\bigbreak

\end{abstract}
%

\section{Introduction}\label{sec_1}
The compressible Euler equations in multiple space dimensions (multi-d) satisfy
\begin{equation}\label{1.1}
\begin{cases}
\rho_t+\nabla\cdot(\rho \mathbf{u})=0,\\
(\rho \mathbf{u})_t+\nabla\cdot(\rho\mathbf{u}\otimes\mathbf{u})+\nabla p=0,
\end{cases}
\end{equation}
where $\mathbf{x}\in\mathbb{R}^d,$ $d=1, 2, 3$, and $t\in \mathbb{R}^+$ are the space and time variables, respectively. The vector valued function $\mathbf{u}\in\mathbb{R}^d$ and the scalar functions  $\rho$ and $p$ stand for the velocity, density and pressure, respectively.
For the polytropic ideal gas, the pressure-density relation is
\beq\label{1.1p}
p=p(\rho)=K\rho^\gamma,\quad\hbox{with}\quad \gamma>1,
\eeq
where $K$ is a positive constant, and $\gamma$ is the  adiabatic constant.

In this paper, we first study global behaviors of classical solutions for the initial value problem of \eqref{1.1} with radial symmetry. Then, we prove the singularity formation for solutions with a large class of initial data involving initial compression. Here the radially symmetric solution of  \eqref{1.1} satisfies the following symmetric transformation:
\begin{equation*}
\rho(\mathbf{x},t)=\rho(r,t),\quad \mathbf{u}(\mathbf{x},t)=u(r,t)\frac{\mathbf{x}}{r}, \quad \hbox{with radius}\quad r=|\mathbf{x}|.
\end{equation*}
Then the functions $(\rho,u)$ are governed by the following Euler equations
\begin{equation}\label{1.2}
\begin{cases}
(A\rho)_t+(A\rho u)_r=0,\\[2mm]
(A\rho u)_t+(A\rho u^2)_r+Ap_r=0,
\end{cases}
\hbox{with}\quad A(r)=r^{d-1},\quad d=1, 2, 3.
\end{equation}
We consider the initial value problem of system \eqref{1.2} for $(r,t)\in[0,\infty)\times[0,\infty)$, subject to the following initial conditions:
\begin{equation}\label{inbo}
\rho(r,0)=\rho_0(r),\qquad u(r,0)=u_0(r).
\end{equation}
%
When $r>0$, system \eqref{1.2} can be written as
\begin{equation}\label{1.2+}
\begin{cases}
\rho_t+(\rho u)_r=-\frac{A'}{A}\rho u,\\[2mm]
(\rho u)_t+(\rho u^2+p)_r=-\frac{A'}{A}\rho u^2.
\end{cases}\end{equation}
See \cite{Dafermos2010} for the derivation of \eqref{1.2} or \eqref{1.2+}.


\smallskip

 The compressible Euler system is the most fundamental model for system of hyperbolic conservation laws.
It is well known that, due to nonlinearity,
classical solutions of systems of hyperbolic conservation laws, may form gradient blowup in finite time, even when initial data are smooth. 
This is a physical effect manifested by the development of shock waves, at which the conserved variables become discontinuous.

The study of breakdown of classical solutions for hyperbolic conservation laws has a long history \cite{Dafermos2010}. One can trace back to Stokes in \cite{stokes} for a breakdown example for some scalar
equation. 
In one space dimensional (1-d), classical solutions may break down, even under small initial oscillation. The result was initiated by the pioneer work of Lax in  \cite{lax2} in 1964 for strictly hyperbolic genuine nonlinear systems with two unknowns, then followed by John, Li-Zhou-Kong and Liu in \cite{Fritzjohn, Li daqian, liu}, and \textit{etc.} for more general systems of conservation laws.

For large data 1-d problem of \eqref{1.1}. i.e. \eqref{1.2} with $d=1$, Lax' method in \cite{lax2} can directly provide an equivalent condition on the initial data for singularity formation when $\gamma\geq 3$. The result can be read as: The finite time singularity, due to shock wave, forms if and only if the initial data include compression.

However, when $1<\gamma<3$, to prove the same equivalent condition on singularity formation,
one needs to first prove a proper density lower bound estimate, since the Riccati equations will degenerate while density approaches zero. See examples with density approaching zero as $t\rightarrow \infty$ in \cite{courant,lin2, jenssen}.  Such a time-dependent lower bound was first found by Chen-Pan-Zhu in \cite{CPZ} then improved to its optimal order $O(1/t)$ by Chen in \cite{G9}. Furthermore, singularity formation for 1-d solutions of non-isentropic Euler equations was proved when the initial compression is stronger than some threshold \cite{CPZ}.

\smallskip
For 3-d solutions of compressible Euler equations, Sideris proved some singularity formation results when considering the initial locally supersonic data, in a pioneer work \cite{Sideris}. Based on
a contradiction argument, he proved that the corresponding solutions cease to be $C^1$ in finite time.
The solution could be rotational with non-trivial (dynamic) entropy. Unfortunately this approach fails to provide any information on the nature of breakdown or identify the actual time of blow-up. Similar results can be found in \cite{Rammaha, Sideris1} for the 2-d case and  in \cite{Makino, Leiduzhang} for the radially symmetric case.

For solutions with small perturbation, Alinhac proved a blowup result for 2-d barotropic compressible Euler equations with radial symmetry \cite{Alinhac1}. A sharp estimate on the life span of classical solution was also given.
Later, Alinhac \cite{Alinhac-book1, Alinhac2, Alinhac3} proved the blowup of classical solutions for a large class of 2-d quasilinear wave equations that fail to satisfy the null condition in \cite{Klainerman}. The isentropic irrotational compressible Euler equations, linked to the quasilinear wave equations, are included.

Another method to study the singularity formation of multi-d solutions is to use the geometric framework introduced by Christodoulou \cite{Chris}. 
He first studied wave equations modeling isentropic, irrotational special relativistic fluid mechanics, and found a shock formation example with some compactly supported initial
data near a given constant state. Later,  Christodoulou-Miao \cite{shuang} studied shock formation for small and compactly supported perturbations of constant solutions to the non-relativistic compressible Euler equations. In particular, for isentropic, irrotational initial data, the work \cite{shuang} yielded a precise description on the singularity formation detected by Sideris \cite{Sideris}, and revealed the fine geometric property of the singular hyperplane (shock front). Recently, Luk and Speck \cite{Speck} considered the 2-d case with small but non-zero vorticity, even at the location of the shock. This result provides the description of the vorticity near a singularity formed from compression.

\smallskip
In this paper, we focus on the singularity formation for radially symmetric multi-dimensional solutions of Euler equations satisfying \eqref{1.2}. 
For radially symmetric solutions, one can still track the propagation of any initial compression wave along characteristic using some Riccati system on gradient variables. Here we use the system established by Chen-Young-Zhang in \cite{G8}. 
However, the geometric effect at the origin and the inhomogeneity caused by the varying $A(r)$ make the problem very different from the 1-d problem.

One of the most challenging parts is to establish the uniform upper bounds on $|u|$ and $\rho$ and the time-dependent lower bound on $\rho$. These global bounds shall hold for classical solutions in the whole space $\{(r,t)\,|\,r\geq0,\ t\geq0\}$ including the origin.

The key ideas, used in this paper, to establish the uniform bounds in the whole space, come from finding invariant regions on both planes of  $(u, \rho)$ and some gradient variables. Here the invariant region on the $(u, \rho)$ plane found in this paper lies in the half plane $u\geq0$. This means that solutions considered in this paper only include  expanding waves, traveling away from the origin. Applying the Riccati system
in Chen-Young-Zhang \cite{G8}, one can track the propagation of strong compression wave then show the finite time blowup. On the other hand, we find global $L^\infty$ bounds on velocity and density before singularity formation. So it is reasonable to believe that the singularity formation is due to the shock formation, where we refer the readers to some earlier works in \cite{Chenshuxing,Chenshuxing2,Kongdexing}. Since there needs no restriction on the size of solutions, all results in this paper are large data results.


Another motivation of this paper, besides studying the shock formation, is to provide useful $C^1$ and $L^\infty$ estimates on velocity and density for classical solutions. These estimates are very useful for the future construction of interesting finite time or global-in-time solutions for \eqref{1.2}.
In fact, for 1-d solutions, a method to construct global special solutions using $C^1$ and $L^\infty$ estimates on velocity and density was provided in a very recent paper in \cite{CCZ}. Currently, for radially symmetric solutions,  we are still lack of a framework on the global BV existence of solutions in the whole space. For the isothermal gas, see BV existence results for the {\em exterior} problem of flow outside of a fixed ball with
reflecting boundary conditions in \cites{mmu,mmu2,mt}. The $L^\infty$ existence for a lager class of initial data for the isentropic gas in the whole space was established in \cite{Chen-1}, while the $L^p$ case are given in \cite{lfw,cp1}, by using the compensated compactness method.


In this paper, for a large class of supersonic initial data, we construct the local existence and uniform upper bound along with the time depend density lower bound, without smallness restriction. By transfer the spacial gradient to new variables satisfying Riccati type equations, the gradient blowup is shown, which indicate the formation of shock.

This paper is divided into six sections. We will introduce our main results in Section 2. In Sections 3 and 4, we will show the local existence and a uniform upper bound estimate. In Section 5, we will prove the results on low bound of density. Finally, in Section 6, we will prove the gradient blowup theorem.

\section{Main results}
In this section, we introduce the main results of this paper. To begin with, we give the definition of classical solutions to system \eqref{1.2}--\eqref{inbo}.
\begin{definition}\label{def_cl}
Let $T$ be any given positive constant. The functions $(\rho,u)(r,t)$ are called a classical solution to the initial value problem of compressible Euler equations \eqref{1.2}--\eqref{inbo} for any $t\in[0,T)$, if
\[\rho\in C^1(\mathbb{R}^+\times[0,T)),~~u\in C^1(\mathbb{R}^+\times[0,T)),\]
solve the system \eqref{1.2} for any $(r,t)\in\mathbb{R}^+\times[0,T)$,
and satisfy the initial condition \eqref{inbo} point-wisely on $r\in\mathbb{R}^+$.
Here $\rho$ is non-negative.
\end{definition}

Our first result is a local existence theorem, which provides a basis on the singularity formation result, \textit{i.e.} the singularity forms from a local-in-time $C^1$ solution. In the same time, we also provide the
uniform $L^\infty$ upper bound on $|u| $ and $\rho$.
First, we introduce the following initial assumption.
\begin{asu}\label{asu_1}
Assume the initial data $(\rho_0,u_0)(r)\in (C^1([0,\infty)))^2$,
$(\frac{\rho_0^{\frac{\gamma-1}{2}}}{r}, \frac{u_0}{r})$ $\in (C^1[0,1))^2$ and there exists a uniform constant $C_0$ such that
\begin{equation}\label{inbo0}
0<\frac{2\sqrt{K\gamma}}{\gamma-1}\rho_0^{\frac{\gamma-1}{2}}(r)\leq u_0(r)\leq C_0,
\end{equation}
for any $r\in(0,\infty)$, and
\begin{equation}
\label{inbo2}
\rho_0(0)=u_0(0)=0.
\end{equation}
\end{asu}

The local existence theorem can be stated as following. We also achieve some uniform $L^\infty$ upper bounds on $|u|$ and $\rho$, as long as the solution is still in $C^1$.
\begin{thm}\label{thm_bounds}
For $1<\gamma\leq3$, when the initial value problem \eqref{1.2}--\eqref{inbo} satisfies the  initial Assumption \ref{asu_1}, there exists $\delta>0$ and a unique solution $(\rho, u)\in (C^1([0, +\infty)\times(0, \delta)))^2$, with
\begin{equation}\label{inbo3}
0< \frac{2\sqrt{K\gamma}}{\gamma-1}\rho^{\frac{\gamma-1}{2}}(r,t)\leq u(r,t)\leq 2C_0,
\end{equation}
for any $r\in(0,\infty)$ and $\rho(0,t)=u(0,t)=0$. Furthermore, for any $\bar{t}>0$, $\bar{r}>0$, the backward characteristics will not touch the origin $r=0$.
\end{thm}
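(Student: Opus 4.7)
The plan is to combine an invariant-region analysis in Riemann invariants with a short-time classical existence argument that accommodates the $1/r$ geometric singularity at the axis $r=0$. Set $m := \tfrac{2c}{\gamma-1} = \tfrac{2\sqrt{K\gamma}}{\gamma-1}\rho^{(\gamma-1)/2}$ and introduce the Riemann invariants $s = u+m$, $z = u-m$. Direct manipulation of \eqref{1.2+} yields the characteristic form
\begin{align*}
\partial_t s + (u+c)\partial_r s &= -\tfrac{d-1}{r}\,cu,\\
\partial_t z + (u-c)\partial_r z &= +\tfrac{d-1}{r}\,cu,
\end{align*}
so that, as long as $u,c \ge 0$, the invariant $s$ is non-increasing along the forward characteristic $C_+:\dot r = u+c$ and $z$ is non-decreasing along the backward one $C_-:\dot r = u-c$.

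For the a priori bounds, I would run a continuation argument on the classical lifespan. On any time interval where $u\ge0$ and $\rho\ge0$ (hence $c\ge0$), the monotonicity above combined with the initial data $0 \le z_0 \le C_0$ and $0 < s_0 \le 2C_0$ from Assumption \ref{asu_1} yields $z \ge 0$ and $s \le 2C_0$. Since for a $C^1$ solution $\rho(r,t)$ solves a linear transport equation along particle paths, $\rho\ge0$ is preserved automatically, which is exactly $m\ge0$, i.e.\ $s\ge z$. Inside the region $\{s \le 2C_0,\ 0\le z\le s\}$ we have $u = (s+z)/2 \le s \le 2C_0$ and $u \ge m \ge 0$, so the region is self-sustaining; the bounds in \eqref{inbo3} therefore persist throughout the classical lifespan.

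Local existence itself splits into two pieces. Away from $r=0$ the system is strictly hyperbolic with smooth coefficients, and a classical iteration in Riemann invariants along $C_\pm$ produces a unique local $C^1$ solution with the bounds above providing a priori control. At the axis, I would introduce the rescaled unknowns $\tilde u = u/r$ and $\tilde m = m/r$, which by Assumption \ref{asu_1} are $C^1$ at $t=0$; the singular source $\tfrac{d-1}{r}cu$ then becomes the smooth expression $(d-1)\,r\,\tilde c\,\tilde u$ with $\tilde c = \tfrac{\gamma-1}{2}\tilde m$, so the rewritten system has continuous coefficients up to $r=0$ and admits a local $C^1$ solution by Picard iteration. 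A matching argument and uniqueness for quasilinear hyperbolic systems give the asserted $C^1$ solution on $[0,\infty)\times[0,\delta)$. For the backward-characteristic claim, the invariant region forces $s,z \ge 0$, so
\[
u - c \;=\; \tfrac{(3-\gamma)s+(\gamma+1)z}{4} \;\ge 0 \quad \text{for } \gamma \le 3,
\]
with $u-c = O(r)$ near the axis (since $s,z = O(r)$ there); hence along any backward characteristic $\dot r = u-c \le Cr$, giving $r(t) \ge \bar r\,e^{-C(\bar t - t)} > 0$ for $t\in[0,\bar t]$, so no backward characteristic ever meets the axis. The main obstacle throughout is the $1/r$ singular coefficient at $r=0$; it is resolved by the vanishing rates $u,c = O(r)$ encoded in the compatibility conditions of Assumption \ref{asu_1}, which simultaneously make the source term in the characteristic ODEs effectively regular and allow the Picard iteration at the axis to close.
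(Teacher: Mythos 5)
Your proposal is correct and follows essentially the same route as the paper: Riemann invariants $w=u+\tfrac{2}{\gamma-1}h$, $z=u-\tfrac{2}{\gamma-1}h$, the sign-definiteness of the source $\mp\tfrac{d-1}{r}uh$ to trap $(w,z)$ in $\{0\le z< w\le 2C_0\}$, a $1/r$-rescaling of the unknowns so the system becomes regular at the axis and a standard quasilinear existence theorem applies, and the Lipschitz bound $u-c=O(r)$ near $r=0$ yielding $r(t)\ge\bar r\,e^{-C(\bar t-t)}>0$ for backward characteristics. The only substantive difference is organizational: the paper extends the problem to $r<0$ by odd/even reflection and works with the weighted variables $\tilde w,\tilde z=\,(w,z)/r$ globally (so there is a single Cauchy problem with smooth coefficients, citing Li's classical existence theorem), whereas you propose a near-axis/away-from-axis split with matching; both work, but the reflection trick avoids the matching step. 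One small slip: after rescaling, the source for $\tilde w$ is $-(d-1)\tilde u\tilde h-(\tilde u+\tilde h)\tilde w$, i.e.\ the extra lower-order term $-(\tilde u+\tilde h)\tilde w$ from differentiating $r\tilde w$ also appears — you wrote only the first piece — but this does not change the conclusion that the rescaled system is regular at $r=0$.
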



In this proof, the main idea is to observe an invariant domain on $t$, when $r\in(0,\infty)$:
\[
 \left\{(w,z)\, \big|\, 0\leq z< w\leq2C_0\right\}
\] 
where
\[
w:=u+\frac{2\sqrt{K\gamma}}{\gamma-1}\rho^{\frac{\gamma-1}{2}},\quad z:=u-\frac{2\sqrt{K\gamma}}{\gamma-1}\rho^{\frac{\gamma-1}{2}},
\]
are two Riemann variables. Here $C_0$ is the arbitrary constant in the initial Assumption \eqref{asu_1}.

From the physic point of view, 
\begin{equation}
\frac{2\sqrt{K\gamma}}{\gamma-1}\rho^{\frac{\gamma-1}{2}}(r,t)\leq u(r,t)
\end{equation}
equals to $Ma>\frac{2}{\gamma-1}$, while $Ma$ is the Mach number. \eqref{inbo0} and \eqref{inbo3} shows if the flows are sufficient supersonic $Ma>\frac{2}{\gamma-1}\geq1$, the flows will keep supersonic. The similar property is exploded in \cite{Chen-1} in the study of $L^\infty$ weak entropy solution.

Next, we will provide a lower bound estimate on density in the domain $(x,t)\in [b,+\infty)\times [0,\infty)$,
for any $b>0$. Note one cannot achieve any positive lower bound on density in the whole plane since the density is always zero at the origin.

The proof relies on finding an invariant domain on some gradient variables in the Lagrangian coordinates. This idea has been first used in \cite{G9} for 1-d solutions. We will state the main results (Theorems \ref{thm_den1}-\ref{thm_den2}) later, after introducing the Euler equations \eqref{La} in the Lagrangian coordinates.

Finally, we present the main theorem on singularity formation. To state the theorem, we first introduce
the Lagrangian coordinates $(x',t')$
\[
x'=\int_0^r A(r)\rho(r)\,dr, \quad t'=t,
\]
where $A$ is defined in \eqref{1.2}. For any time, $x'\in[0,\infty)$ is a strictly increasing function on $r$.
\begin{thm}\label{thm_2.4}
For $1<\gamma<3$ and $d=2,3$, assume the initial data satisfy conditions  in Assumption \ref{asu_1} and $(\alpha(x',0)$, $\beta(x',0)$, $\tilde{\alpha}(x',0)$, $\tilde{\beta}(x',0))$ defined in \eqref{ab1} and \eqref{ab2} are all uniformly bounded.  Assume one of the following two conditions holds:

(i) There exist some $\bar x'$ and $T$: satisfying condition \eqref{Test}, such that
\begin{equation*}
Y(\bar x',0)<-G_{\gamma,T}(\bar x'), \quad\hbox{when}\quad \gamma\neq\frac{5}{3};
\end{equation*}
or satisfying condition \eqref{Test1}, such that
\begin{equation*}
Y_1(\bar x',0)<-G_{\gamma,T}(\bar x'),\quad\hbox{when}\quad \gamma=\frac{5}{3}.
\end{equation*}

(ii)  There exist some $\bar x'_0>\bar x'>0$ and $T$: satisfying condition \eqref{Test} and $r(\bar{x}'_0,0)-r(\bar{x}',0)\geq 2C_0\gamma T$, such that
\begin{equation*}
Q(\bar x'_0,0)<-G_{\gamma,T}(\bar x') ,\quad\hbox{when}\quad \gamma\neq\frac{5}{3};
\end{equation*}
or satisfying condition \eqref{Test1} and $r(\bar{x}'_0,0)-r(\bar{x}',0)\geq 2C_0\gamma T$, such that
\begin{equation*}
\quad Q_1(\bar x'_0,0)<-G_{\gamma,T}(\bar x') ,\quad\hbox{when}\quad \gamma=\frac{5}{3}.
\end{equation*}

Then the $C^1$ solutions of \eqref{La} will break down in finite time before $t=T$. Here $G_{\gamma,T}(x')>0$ will be defined in \eqref{GT} and \eqref{YQE1} for different $\gamma$ values.  The gradient variables $Y$, $Q$, $Y_1$ and $Q_2$ will be defined in \eqref{YQ} and \eqref{YQ1}.
\end{thm}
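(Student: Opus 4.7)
The plan is to follow the classical Lax-type blowup strategy adapted to the radial Euler system in Lagrangian coordinates, using the Riccati-type decoupling established in Chen-Young-Zhang \cite{G8} together with the a priori bounds from Theorem \ref{thm_bounds} and the density lower bound estimates referenced as Theorems 2.3--2.4. First I would invoke local existence to guarantee a $C^1$ solution on some time interval with the uniform bounds $0 < z < w \leq 2C_0$ and $0 < u \leq 2C_0$ on the whole half-plane, and then use the density lower bound on any strip $\{r \geq b > 0\}$ to ensure $\rho \geq \rho_{\min}(t,b) > 0$ away from the origin. These bounds are what make the Riccati argument well-posed, since the coefficient of the quadratic destabilizing term involves a positive power of $\rho$ and degenerates as $\rho \to 0$.

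Next I would write down the decoupled Riccati-type equations satisfied by $Y$ and $Q$ (respectively $Y_1, Q_1$ in the critical case $\gamma = 5/3$) along the two characteristic families in the Lagrangian coordinates $(x',t')$. Schematically these take the form
\begin{equation*}
\partial_+ Y = k_0(x',t') + k_1(x',t')\,Y + k_2(x',t')\,Y^2,
\end{equation*}
with $k_2$ bounded below by a strictly positive quantity depending on the density lower bound, and symmetrically for $Q$ along the opposite family. The function $G_{\gamma,T}(x')$ in \eqref{GT} is precisely constructed as the negative threshold that absorbs the time-integrated contribution of $k_0$ and $k_1$, so that whenever the initial value of $Y$ (or $Q$) lies below $-G_{\gamma,T}$, comparison with the pure ODE $y' \geq c\, y^2 - (\text{bounded})$ forces $Y \to -\infty$ in time strictly less than $T$, contradicting the $C^1$ regularity.

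For case (i) the argument is direct: integrate the Riccati equation along the single characteristic emanating from $(\bar x', 0)$ and read off the blowup before time $T$. Case (ii) requires an additional geometric step: the initial compression is located at $\bar x'_0 > \bar x'$, not at $\bar x'$ itself, so one tracks $Q$ along the characteristic starting at $\bar x'_0$ and must guarantee this characteristic remains in the region where the Riccati equation is valid up to the predicted blowup time. This is exactly what the assumption $r(\bar x'_0, 0) - r(\bar x', 0) \geq 2C_0 \gamma T$ provides: combined with the uniform upper bound on $u$ and on the sound speed $\sqrt{p'(\rho)} = \sqrt{K\gamma}\,\rho^{(\gamma-1)/2} \leq \frac{\gamma-1}{2}\cdot 2C_0$ coming from \eqref{inbo3}, it ensures the characteristic from $\bar x'_0$ cannot reach $\bar x'$ before time $T$, so the Riccati blowup for $Q$ completes before any interaction with the origin-side boundary.

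The main obstacle will be controlling the inhomogeneous source terms generated by the geometric factor $A'/A = (d-1)/r$ and by the residual coupling between $Y$ and $Q$ through the Riemann invariants $w, z$. Unlike the $1$-d case these source terms do not vanish, and they must be majorized uniformly using the $L^\infty$ bounds $0 \leq z < w \leq 2C_0$ and the density lower bound from Theorems 2.3--2.4, in a form sharp enough that the definition of $G_{\gamma,T}$ in \eqref{GT} (respectively \eqref{YQE1} for $\gamma = \frac{5}{3}$) remains finite. The separate treatment of $\gamma = \frac{5}{3}$ reflects the well-known resonance where the standard rescaling normalizing the Riccati equation degenerates, forcing the use of the modified variables $Y_1, Q_1$ and a logarithmically adjusted threshold.
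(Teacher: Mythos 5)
Your outline matches the paper's strategy at the architectural level (Lagrangian Riccati decoupling, the threshold $G_{\gamma,T}$, the blowup-time estimate via a comparison ODE, the domain-of-dependence separation for case (ii), and the separate variables for the resonant exponent $\gamma=\frac{5}{3}$), but two details are stated backwards or imprecisely, and they matter. First, the sign of the quadratic coefficient: in the paper's normal form $\partial_+Y=d_0+d_1Y+d_2Y^2$ one has $d_2=-\frac{\gamma+1}{2(\gamma-1)}K_c\eta^{\frac{3-\gamma}{2(\gamma-1)}}<0$; what is bounded below, using the $v$-upper-bound of Theorems \ref{thm_den1}--\ref{thm_den2}, is $-d_2$, not $d_2$. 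Your schematic $y'\geq c\,y^2-(\text{bounded})$ with $c>0$ drives a solution to $+\infty$ and cannot force $Y\to-\infty$ for $Y<0$; the mechanism that actually works is the reverse inequality $\partial_+Y<\tfrac{1}{2}d_2Y^2<0$ (equation \eqref{part_main}), obtained once $Y$ drops below $-G_T$, so that $Y$ decreases monotonically and $1/Y$ is driven up to $0$ from below by the time $\frac{1}{2}\int_0^T(-d_2)\,ds\geq 1/G_T$, which is exactly what the lower bound \eqref{Test} on $T$ guarantees. Second, the role of $G_{\gamma,T}$ is not to "absorb a time-integrated contribution" of the lower-order coefficients: it is chosen so that, pointwise in time, the quantities $-2d_1/d_2$, $-2\bar d_1/d_2$, and the real roots $Y_\pm,Q_\pm$ of the quadratic $d_0+d_1Y+\tfrac{1}{2}d_2Y^2$ all lie above $-G_T$, which is what yields \eqref{part_main} once $Y<-G_T$; and there is no residual coupling between $Y$ and $Q$ to be controlled — the two Riccati equations in \eqref{DRi} are genuinely decoupled, with the time-dependence of their coefficients coming only through the state $(u,\eta,r)$, which is dominated by the $L^\infty$ bounds of Theorem \ref{thm_bounds} and the $v$-upper-bound. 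With these two corrections the remainder of your plan, including the case-(ii) observation that $r(\bar x'_0,0)-r(\bar x',0)\geq 2C_0\gamma T$ keeps the backward characteristic carrying $Q$ inside the region $\Omega$ where the density estimates hold, does reproduce the paper's argument.
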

When $\gamma=3$, we have a stronger version of singularity formation.
\begin{thm}\label{thm_main3}
For $\gamma=3$, assume the initial data satisfy conditions in Assumption \ref{asu_1}. If there exists some $\bar{x}'>0$ such that,
\begin{equation}\label{Hcritical}
Y_2(\bar{x}',0)<-H(\bar{x}'), \quad\hbox{or}\quad Q_2(\bar{x}',0)<-H(\bar{x}'),
\end{equation}
with $H(\bar{x}')$ defined in \eqref{YQE2},
 then the $C^1$ solution of \eqref{La} will break down in finite time. Here $Y_2$, $Q_2$ are defined in \eqref{YQ2}.
\end{thm}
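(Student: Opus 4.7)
The plan is to follow characteristics from $(\bar{x}',0)$ in the Lagrangian coordinate system and reduce the evolution of $Y_2$ (respectively $Q_2$) to a scalar Riccati-type ODE, exploiting the special decoupling of the Riemann invariants that occurs precisely at $\gamma=3$.

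First I would collect the a priori control already in place. Theorem \ref{thm_bounds} supplies the uniform $L^\infty$ bounds on $|u|$ and $\rho$ and, crucially for the Lagrangian setup, shows that no backward characteristic from $(\bar r,\bar t)$ with $\bar r>0$ reaches the origin; Theorems \ref{thm_den1}--\ref{thm_den2} then give a strictly positive (possibly $t$-dependent) lower bound on $\rho$ on the relevant characteristic tube. These two ingredients keep the coefficients of the gradient equations finite and keep the geometric factor $A'(r)/A(r)=(d-1)/r$ bounded along the characteristic issued from $(\bar x',0)$.

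Next I would write the Chen--Young--Zhang equations for the gradient variables $Y_2$, $Q_2$ defined in \eqref{YQ2}. For generic $\gamma$ the equations for $Y$ and $Q$ are genuinely coupled through the Riemann invariants, which is why Theorem \ref{thm_2.4} has to propagate both quantities simultaneously and requires a pre-selected time $T$. At $\gamma=3$, however, the eigenvalues satisfy $\lambda_+=w$ and $\lambda_-=z$, so each Riemann invariant is transported along its own characteristic by itself; the cross terms between $Y_2$ and $Q_2$ drop, and one obtains a scalar Riccati inequality of the schematic form $\dfrac{dY_2}{ds}\le -Y_2^{\,2}+M(s)$ along the $+$-characteristic (and an analogous inequality for $Q_2$ along the $-$-characteristic), where $M$ is a nonnegative source controlled by the sup-norm and density bounds assembled in the first step.

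An elementary Riccati comparison then closes the argument: if $Y_2(\bar x',0)$ is more negative than the threshold $H(\bar x')$ of \eqref{YQE2}, which is precisely the quantity built from the integrated upper bound on $M$, then the quadratic term dominates and $Y_2(s)\to-\infty$ in finite time, forcing $u_r$ or $\rho_r$ to blow up and the classical solution to leave $C^1$. The argument for $Q_2$ is symmetric. The main obstacle is packaging the source $M$ into the clean explicit threshold $H(\bar x')$: this requires carefully tracking the geometric inhomogeneity $A'/A$ in Lagrangian variables along the forward characteristic and checking that the resulting time integral converges without needing to cap the time horizon in advance. It is precisely the straight-characteristic structure at $\gamma=3$ that makes this possible and distinguishes Theorem \ref{thm_main3} from its $\gamma\ne 3$ counterpart in Theorem \ref{thm_2.4}.
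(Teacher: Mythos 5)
Your overall shape — follow a forward characteristic from $(\bar x',0)$ and run a Riccati comparison on $Y_2$ (resp.\ a backward one for $Q_2$) until it reaches $-\infty$ — matches the paper.  However several of the mechanisms you cite are not the ones that actually make the $\gamma=3$ case work, and one ingredient you invoke is unavailable.

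First, the variables $Y$ and $Q$ (and their $\gamma$-dependent variants $Y_1,Q_1,Y_2,Q_2$) already satisfy \emph{decoupled} Riccati equations \eqref{DRi} for \emph{every} $\gamma$; this is the point of the Chen--Young--Zhang transformation.  So ``the cross terms between $Y_2$ and $Q_2$ drop at $\gamma=3$'' is not the special feature here, and the observation $c_2=w,\ c_1=z$ (true for $\gamma=3$ in 1-d) plays no role in this Lagrangian radial setting where the inhomogeneity $\frac{d-1}{r}uh$ bends the characteristics anyway.  The actual algebraic miracle is that
\[
d_2 \;=\; -\tfrac{\gamma+1}{2(\gamma-1)}K_c\,\eta^{\frac{3-\gamma}{2(\gamma-1)}}
\;\xrightarrow{\ \gamma=3\ }\; -K_c ,
\]
a genuine \emph{constant}.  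Consequently $\int_0^{t'}(-d_2)\,ds = K_c t'$ grows linearly — note that you want this integral to \emph{diverge}, not ``converge'': it is precisely the divergence that forces $1/Y_2$ to cross zero at the finite time $t_1'=-2/(K_cY_2(\bar x',0))$.  For $\gamma<3$ the exponent on $\eta$ is positive, $d_2$ degenerates as the density drops, and that is why those cases require a pre-chosen $T$ and the density lower bounds; at $\gamma=3$ none of this is needed.

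Second, your appeal to Theorems~\ref{thm_den1}--\ref{thm_den2} for a density lower bound is out of scope: those theorems are proved for $1<\gamma\le\frac{5}{3}$ and $\frac{5}{3}<\gamma<3$ respectively and do not cover $\gamma=3$.  More to the point, the paper's proof of Theorem~\ref{thm_main3} does not use a density lower bound at all.  The bound \eqref{YQE2} on $-2\check d_1/d_2$, $Y_{2,\pm}$, $Q_{2,\pm}$ is obtained from the $L^\infty$ \emph{upper} bounds $u\le 2C_0$, $\eta\le r^{d-1}u$ of Theorem~\ref{thm_bounds}, the monotone increase of $r$ along both characteristics (so $r\ge r(\bar x',0)$), and the elementary inequality $\ln\eta\le\eta^{1/2}$.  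That is what makes $H(\bar x')$ genuinely independent of the time horizon.

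Third, $H(\bar x')$ is \emph{not} ``the integrated upper bound on the source $M$''.  It is a pointwise lower bound on the roots $Y_{2,\pm}$ of $\check d_0+\check d_1Y+\tfrac12 d_2Y^2$ and on $-2\check d_1/d_2$.  The comparison is therefore not of the form $\partial_+Y_2\le -Y_2^2+M(s)$; rather, writing $\partial_+Y_2 = (\check d_0+\check d_1Y_2+\tfrac12 d_2Y_2^2)+\tfrac12 d_2Y_2^2$, one checks that once $Y_2<-H(\bar x')$ the bracket is negative (both when the discriminant is nonnegative — $Y_2<Y_{2,\pm}$ — and when it is negative — $\check d_0<0$ and $Y_2+2\check d_1/d_2<0$), giving $\partial_+Y_2< -\tfrac12 K_c Y_2^2$.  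This is a sign argument keeping $Y_2$ trapped below $-H$, not a Gronwall/integrated-source bound.  With these corrections the argument closes exactly as the paper does.
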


Since when $d=1$, an if and only if condition on the initial date for singularity formation was proved in \cite{lax2,CPZ}, we do not include the 1-d case in this theorem.

We note that if $Y$ and $Q$ (or $Y_1,Q_1$, $Y_2,Q_2$ for other $\gamma$ value) are both bounded away from negative infinity when $t\in[0,T)$, one can prove the global existence up to time $T$, using the uniform $L^\infty$ estimates on $\rho$ and $u$. The upper bounds on $Y$'s and $Q$'s can be proved using the results in Theorems \ref{thm_den1}-\ref{thm_den2}, where the case when $\gamma=3$ can be treated in a similar method. Here $T$ can be infinity.

By the a priori $L^\infty$ estimate in \eqref{thm_bounds} and the time dependent density lower bound, we know when singularity forms, the state variables $u$, $\rho$ are both finite, and the
specific volume $1/\rho$ is finite except on the line $r=0$.
On the other hand, $C^1$ solution exists before the earliest time of gradient blowup calculated by the Riccati system.
So it is reasonable to believe that the singularity forms due to the shock formation. We refer the readers to some earlier works \cite{Chenshuxing, Chenshuxing2, Kongdexing} showing why the singularity is a shock in the $1$-$d$ case.

Finally we give a remark on the initial assumption \eqref{inbo2}.
\begin{remark}
The assumption $u_0(0)=0$ is a reasonable physical assumption, under which the system \eqref{1.2} is satisfied at the origin.

To make the initial assumption \eqref{inbo3} satisfied on the entire half line $0\leq r<\infty$,
we must assume that $\rho_0(0)=0$. Then we can obtain a global $L^\infty$ estimate on density and velocity using the invariant domain argument, before singularity formation.

On the other hand, we can still claim a singularity formation result when $\rho_0(0)>0$. In fact, we can consider the initial data with $\rho_0(0)>0$, but satisfying \eqref{inbo0} on a set $r>\varepsilon$ for some $\varepsilon>0$. By restricting the consideration on the domain of dependence corresponding to the initial ray $r>\varepsilon$ at $t=0$, we can still establish the $L^\infty$ bounds similarly as in Theorem \ref{thm_bounds} and Theorems \ref{thm_den1} and \ref{thm_den2}, then derive a finite time singularity formation result. Here we omit the detail of this construction. But we note that for such a result, we do not have any global $L^\infty$ estimates including the origin, before blow-up, using the methods in the current paper.

If the initial assumption is satisfied, no matter on the whole half line $r\geq 0$ or on $r>\varepsilon$, the singularity formation considered in this paper is due to the concentration of compression wave propagating in the outward direction in the Eulerian coordinates, i.e. both wave speeds $c_1,\ c_2\geq 0$ when $1<\gamma\leq 3$, by \eqref{cdef} and \eqref{inbo3}.
\end{remark}

\section{Local existence}\label{sec_3}
Let's first introduce some notations. We define the function $h$ as
\begin{equation*}\label{h_def}
h:=\sqrt{K\gamma}\rho^{\frac{\gamma-1}{2}},
\end{equation*}
which takes the role of $\rho$.
Then we define the Riemann variables as
\begin{equation*}\label{swdef}
\textstyle
w:=u+\frac{2}{\gamma-1}h,\quad z:=u-\frac{2}{\gamma-1}h.
\end{equation*}

For any smooth solution away from the center, the equation \eqref{1.2+} could be diagonalized as:
\begin{equation}\label{sw}
\begin{cases}
w_t+c_2w_r=-\frac{d-1}{r}uh,\\
z_t+c_1z_r=\frac{d-1}{r}uh,
\end{cases}
\end{equation}
where two characteristic speeds are denoted as
\begin{equation}\label{cdef}
\begin{cases}
c_1=u-\sqrt{p'(\rho)}=u-h,\\
c_2=u+\sqrt{p'(\rho)}=u+h.
\end{cases}
\end{equation}

Instead of  considering a boundary value problem in the region $r\geq 0$, we can consider an initial value problem with initial data given on $r\in \mathbb R$. Here we
add the solution when $r<0$ by reflecting the solution with positive radius.

Assume that, when $r<0$,
\[
u(r,0)=-u(-r,0), \quad h(r,0)=h(-r,0).
\]
Then it is easy to see that, when $r<0$,
\[
c_1(r,0)=-c_2(-r,0),\qquad
c_2(r,0)=-c_1(-r,0),
\]
and
\[
w(r,0)=-z(-r,0),\qquad
z(r,0)=-w(-r,0).
\]
So, if $(u(r,t), h(r,t))$ is a solution for \eqref{sw} on
$(r,t)\in[0,\infty)\times[0, \delta)$ with $u(0,t)=h(0,t)=0$, then
 $(u(r,t), h(r,t))$ is a solution for \eqref{sw} on
$(r,t)\in(-\infty,\infty)\times[0, \delta)$.

Next to cope with the vacuum at the origin, we need to introduce the weighted variables:
\[\tilde{u}:=\frac{u }{r},\qquad \tilde{h} =\frac{h }{r},\qquad \hbox{for any } r\in\mathbb R,\]
and
\[\tilde{w} =\tilde{u} +\frac{2}{\gamma-1}\tilde{h},\qquad \tilde{z} =\tilde{u} -\frac{2}{\gamma-1}\tilde{h} .\]
Then \eqref{sw} is reformulated as:
\begin{equation}\label{stwh}
\begin{cases}
\tilde{w} _t+c_2  \tilde{w} _r=-(d-1)\tilde{u}  \tilde{h} -(\tilde{u} +\tilde{h} )\tilde{w} ,\\
\tilde{z} _t+c_1  \tilde{z} _r=(d-1)\tilde{u}  \tilde{h} -(\tilde{u}-\tilde{h} )\tilde{z} .
\end{cases}
\end{equation}

Since we assume that $\|\tilde{h}_0,\tilde{u}_0\|_{C^1}$ is finite. So there exists $T_\delta>0$, such that the solution of \eqref{stwh} exists in $t\in[0,T_\delta]$. And the life-span $T_\delta<\infty$ if and only if $\|\tilde{h},\tilde{u}\|_{C^1}$ blows up in finite time. This result is standard, since \eqref{stwh} includes no singular term. The reader can find a proof of local existence at Theorem 2.1 in \cite{Li-book2}, and a global one when $\|\tilde{h},\tilde{u}\|_{C^1}$ is a priorily bounded in \cite{Lidaqian}.

Now we transform the $C^1$ solution of \eqref{stwh} to \eqref{sw} when $r>0$ under Assumption \ref{asu_1} on the initial data.

First, we claim that $\tilde h\neq 0$ for any $r\neq 0$.
In fact, by \eqref{stwh}, we know that
\beq\label{local_est}
{\tilde h}_t+r {\tilde u} {\tilde h}_r= -\left({\tilde u}+\frac{\gamma-1}{2} (r\tilde u)_r +\frac{\gamma-1}{2}(d-1){\tilde u}\right){\tilde h}.
\eeq
So along any flow map
\[
\frac{dr(t)}{dt}=r {\tilde u},
\]
for any $C^1$ solution of \eqref{stwh}, we have
\[
\tilde h(r(t),t)\neq 0, \quad\hbox{if}\quad \tilde h(r(0),0)\neq 0.
\]

By Assumption \ref{asu_1}, $\tilde h(r,0)$ possibly vanishes only when $r=0$, where the corresponding flow map is $r(t)\equiv 0$. Hence,
\[\tilde h(r,t)\neq 0,\quad\hbox{when}\quad  r\neq 0.\]
As a consequence, also by \eqref{local_est},
\[
h(r,t)> 0,\ \rho(r,t)>0 \quad\hbox{when}\quad  r\neq 0.
\]

Since $\tilde{h},\tilde{u}\in {C^1}$, we know $h(0,t)=u(0,t)=0$ and $c_1(0,t)=c_2(0,t)=0$. So
 the characteristic $r=0$ will split the positive radius and negative radius regions.
When $r\in(0,\infty)$, the $C^1$ solution of \eqref{stwh} will give a non-vacuum solution of \eqref{sw}, under Assumption \ref{asu_1}. Adding $h(0,t)=u(0,t)=0$, we get a local solution of \eqref{sw}, before any gradient blowup.

Note, the method in this section will not give a useful density lower bound for a future singularity formation result, although we can prove there is no vacuum when $r>0$. A better density lower bound will be given in section \ref{sec_4}.

Now, we want to show: for any $\bar{t}>0$, $\bar{r}>0$, the backward characteristics will not touch the origin $r=0$. First, the backward characteristics are defined as follow:
\begin{equation*}
l_1:\quad \frac{d r_1}{d t}=c_1 (r_1, t)=u (r_1, t)-h (r_1, t),
\end{equation*}
and
\begin{equation*}
l_2:\quad \frac{d r_2}{d t}=c_2 (r_2, t)=u (r_2, t)+h (r_2, t),
\end{equation*}
with $r_1(\bar{t})=r_2(\bar{t}=\bar{r}$. From above properties, before touch the origin $r=0$, $\rho>0$ which implies $r_1(t)<r_2(t)$ for $t<\bar{t}$. So, one just need to show $r_1(t)>0$ for  $t<\bar{t}$. Due to $h(0,t)=u(0,t)=0$,
\begin{equation}
c_1(r, t)=c_1(r,t)-c_1(0,t)\leq L r,
\end{equation}
while $L$ is Lipschitz constant of $h$ and $u$ . Then, for $l_1$, 
\begin{equation}
\frac{d r_1}{d t}\leq L r_1,
\end{equation}
which leads to for any $t<\bar{t}$, 
\begin{equation}
r_1(t)>e^{L(t-\bar{t})}\bar{r}>0.
\end{equation}
Hence, the both backward characteristics will not touch the origin. And this porosity will still hold after smooth transformation, like from Eulerian to Lagrangian coordinates.

\section{An invariant domain on density and velocity: upper bound}\label{sec_bou}

In this section, we give a uniform upper bound on density and velocity, by finding an invariant domain.

\begin{lma}
Suppose $(h, u)$ is any $C^1$ solution of \eqref{sw} when $r>0$ and  $0<t<T$, satisfying
\begin{equation*}
0\leq u_0-\frac{2}{\gamma-1}h_0 < u_0+\frac{2}{\gamma-1}h_0\leq 2C_0,
\end{equation*}
and other initial conditions in Assumption \ref{asu_1}, then
\begin{equation}\label{uni-bound-h}
0\leq u (r, t)-\frac{2}{\gamma-1}h (r, t)< u (r, t)+\frac{2}{\gamma-1}h (r, t)\leq 2C_0,
\end{equation}
i.e.
\[
0\leq z< w\leq 2C_0,
\]
when $r>0$ and  $0<t<T$.
\end{lma}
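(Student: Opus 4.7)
The plan is to run a standard invariant region argument for the diagonal system \eqref{sw}, propagating the bounds along the two Riemann characteristics. The algebraic observation that drives the argument is that inside the target rectangle $\{0 \le z \le w \le 2C_0\}$ one automatically has $u = \tfrac{w+z}{2} \ge 0$ and $h = \tfrac{\gamma-1}{4}(w-z) \ge 0$, so the geometric source term $\tfrac{d-1}{r} u h$ is non-negative for $r > 0$. Consequently $w$ is non-increasing along forward $c_2$-characteristics and $z$ is non-decreasing along forward $c_1$-characteristics, as long as the solution remains in the rectangle.

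The main step is a continuity/bootstrap in $t$. I would set
\[
t^* := \sup\bigl\{\, t \in [0,T) \,:\, 0 \le z(r,s) \le w(r,s) \le 2C_0 \text{ for all } r>0 \text{ and } s \in [0,t] \,\bigr\},
\]
which is non-negative by the initial hypothesis and defines a closed condition in $t$ by continuity of $(z,w)$, so the rectangle bound actually holds on $[0, t^*]$. The claim is $t^* = T$. Assuming $t^* < T$ for contradiction, fix any $r_* > 0$; by the final claim of Section~3, the backward $c_1$- and $c_2$-characteristics $r_1(\cdot), r_2(\cdot)$ through $(r_*, t^*)$ stay strictly positive on $[0, t^*]$. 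Integrating the diagonal equations \eqref{sw} along these characteristics and using the non-negativity of $\tfrac{d-1}{r} u h$ on $[0, t^*]$ gives
\[
z(r_*, t^*) \ge z_0(r_1(0)) \ge 0, \qquad w(r_*, t^*) \le w_0(r_2(0)) \le 2C_0,
\]
and a short-time extension to $[0, t^* + \delta]$ then closes the bootstrap, contradicting the maximality of $t^*$.

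The principal obstacle is making that short-time extension uniform in the base point $r_* > 0$, which is subtle because of the singular geometric factor $\tfrac{1}{r}$ near the origin. I would handle this by reusing the Gronwall-type lower bound $r_1(s) \ge e^{L(s-t^*)} r_*$ established at the end of Section~3 (which follows from $c_1(0,t) = c_2(0,t) = 0$ together with the Lipschitz regularity of $c_{1,2}$), ensuring that $\tfrac{1}{r_1}$ remains integrable along the characteristic uniformly in $r_*$ on a strip $[0, t^* + \delta]$ with $\delta$ small; the $C^1$-norm of $(u, h)$ is bounded on such a strip by the local existence theorem of Section~3. The strict separation $z < w$ (equivalently, $h > 0$ for $r > 0$) is already supplied by the non-vacuum argument in Section~3, so only the one-sided non-strict bounds $z \ge 0$ and $w \le 2C_0$ require the invariant-region analysis above.
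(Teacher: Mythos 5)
Your key algebraic observation is correct and exactly matches the paper's: inside the closed rectangle $\{0\le z\le w\le 2C_0\}$ one has $u\ge 0$, $h\ge 0$, so the geometric source $\tfrac{d-1}{r}uh$ is nonnegative, and hence $w$ decays along $2$-characteristics while $z$ grows along $1$-characteristics. This is the invariant-region heart of the lemma, and you also correctly note that strict positivity $h>0$ for $r>0$ (hence $z<w$) is supplied separately by the non-vacuum argument of Section~3.

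Where the proposal does not close is precisely the step you flag as the ``principal obstacle,'' the short-time extension past $t^*$. Two things go wrong. First, the Gronwall bound $r_1(s)\ge e^{L(s-t^*)}r_*$ gives $\tfrac{1}{r_1(s)}\le e^{L(t^*-s)}/r_*$, which blows up as $r_*\to 0$; so the claim that $\tfrac{1}{r_1}$ is integrable \emph{uniformly in $r_*$} on the strip is false, and the escape from the rectangle could in principle concentrate near $r=0$ (or at $r\to\infty$), defeating a compactness-free sup over all $r_*>0$. Second, even for a \emph{fixed} $r_*$, the quantitative estimate you sketch only controls the increment $|z(r_*,t^*+\varepsilon)-z(r_1(t^*),t^*)|$ and therefore yields $z\ge -\text{(small)}$, not $z\ge 0$: on $(t^*,t^*+\varepsilon]$ you no longer know the sign of $uh$, so the source term cannot be assumed favorable, and a Gronwall bound on the modulus cannot upgrade ``close to nonnegative'' to ``nonnegative.''

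The paper closes the argument with a purely qualitative first-escape contradiction along a \emph{single} characteristic, which sidesteps both issues. Supposing $z(\bar r,\bar t)<0$, one follows the $1$-characteristic $r_1(\cdot)$ through $(\bar r,\bar t)$ backward (it never reaches $r=0$ by the Section~3 argument), picks the last time $t_1$ at which $z(r_1(t_1),t_1)=0$ with $z<0$ immediately after, notes that at $t_1$ one has $u=\tfrac{2}{\gamma-1}h>0$ so $\partial_- z=\tfrac{d-1}{r_1(t_1)}uh>0$, and this contradicts $z$ becoming negative just past $t_1$. No uniformity over base points, no integrability of $\tfrac{1}{r}$, and no smallness of $\delta$ are needed. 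If you replace your ``short-time extension'' with this pointwise contradiction at the first-touch time, the rest of your writeup (including the reduction to the one-sided inequalities) is sound and in fact identical to the paper's proof.
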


\begin{remark}
The above result implies:
\begin{equation*}
0<\frac{2}{\gamma-1}h (r, t)\leq u (r, t)\leq 2C_0,
\end{equation*}
when $r>0$.
\end{remark}

\begin{proof}

%

We first prove $z\geq0$ by employing a contradiction argument.
Assume $z (\bar{r},\bar{t})<0$  at some point $(\bar{r},\bar{t})$ with $\bar{t}>0$. 
We still use $l_1$ to denote the 1-characteristic $r=r_1(t)$ though the point $(\bar{r},\bar{t})$:
\[
\frac{d r_1}{dt}=c _1(r_1(t), t),\qquad r_1(\bar{t})=\bar{r}.\]
See Figure \ref{Fig:zest}.
\begin{figure}[h!]
\centering
 \scalebox{0.5}{\includegraphics{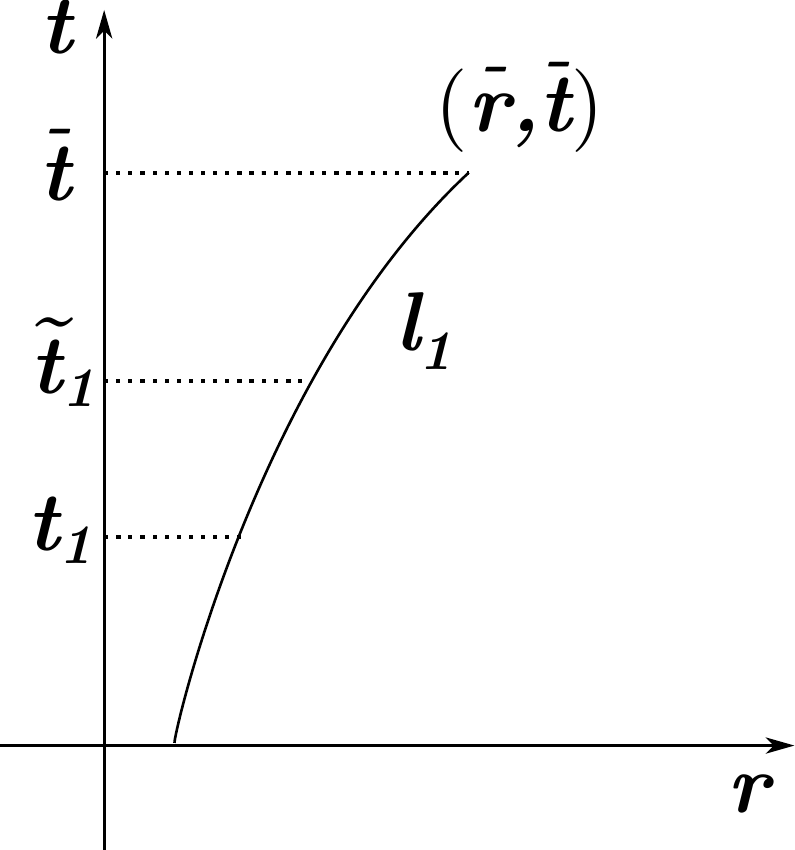}}
\caption{Proof of Lemma 4.1.}\label{Fig:zest}
\normalsize
\end{figure}

Then we can find a time $t_1$ with $0<t_1<\tilde{t}_1\leq \bar{t}$, such that, along the characteristic $r_1(t)$, 
\begin{itemize}
\item $z(r_1(t), t)\geq0$ when $t\in[0, t_1)$, 
\item $z(r_1(t_1), t_1)=0$, and 
\item $z(r_1(t), t)<0$ when $t\in(t_1,\tilde{t}_1]$.
\end{itemize}
Since $h >0$ along $r_1$, now we know that $0<\frac{2}{\gamma-1}h \leq u $ when $t\in[0, t_1]$, hence,
\begin{equation*}
z _t+c_1  z _r=\frac{d-1}{r}u  h >0.
\end{equation*}
This contradicts to that $z({r}_1(t_1),t_1)=0$ while $z({r}_1(t),t)<0$ when $t_1< t\leq \tilde{t}_1$. Therefore $0\leq z $.

Then clearly for $t>0$, $r>0$, $u(r, t) > \frac{2}{\gamma-1}h (r, t)>0$ and
	\begin{equation*}
	w _t+c _2 w _r=-\frac{d-1}{r} u  h  <0,
	\end{equation*}
	which implies $w (t)\leq w(0)$ along any 2-characteristic $\frac{dr_2}{dt}=c_2$.
	Therefore, we have
	$$
	0\leq z < w \leq 2 C_0.
	$$
\end{proof}


\section{Lower bound of density}\label{sec_4}
The aim of this section is to give some lower bound estimates on the density function in classical solutions,
when $1<\gamma\leq3$ for most physical cases. To find these estimates, it is much more convenient to study
the solution in the Lagrangian coordinates. For convenience, we always assume that $d=2,\, 3$ in this section.
In fact, when $d=1$, there is an $O(1+t)^{-1}$ estimate on lower bound of density in \cite{G9}.
\subsection{Lagrangian coordinates}
In this section, we use Lagrangian coordinates $(x',t')$ transformed from the Eulerian coordinate $(r,t)$, defined by
\begin{equation}\label{Laco}
x'=\int_0^r A(r)\rho(r)\,dr, \quad t'=t,
\end{equation}
where
\begin{equation*}\label{Adef}
A:=A(r)=r^{d-1}.
\end{equation*}

For $C^1$ solutions satisfying Assumption \ref{asu_1} on the initial data,  $\rho(r,t)>0$ when $r>0$, while $\rho(0,t)=0$, by Theorem \ref{thm_bounds}.
So we know
$x'(r,t)$ is well-defined and strictly increasing on $r$ for any $t\geq 0$, with $x'(0,t)=0$.
And
$r(x',t')$ is well-defined and strictly increasing on $x'$ for any $t'$.

Using system (\ref{1.2}), it is easy to check that the transformation satisfies
\begin{equation*}\label{lt1}
\left\{
\begin{split}
dx'&=A\rho dr-A\rho u dt\,,\\[2mm]
dt'&=dt,
\end{split}
\right.
\end{equation*}
and
\begin{equation}\label{lt2}
\left\{
\begin{split}
\frac{\partial}{\partial t}&=-A\rho u \frac{\partial}{\partial x'}+\frac{\partial}{\partial t'}\,,\\[2mm]
\frac{\partial}{\partial r}&=A\rho \frac{\partial}{\partial x'}\,.
\end{split}
\right.
\end{equation}

We define a new variable
\begin{equation*}\label{vdef}v:=\frac{1}{A(r)\rho},\end{equation*}
 then system \eqref{1.2} can be written in the Lagrangian frame as
\begin{equation}\label{La}
\left\{
\begin{split}
v_{t'}-u_{x'}=0,\\[2mm]
u_{t'}+Ap_{x'}=0.
\end{split}
\right.
\end{equation}
For convenience, we introduce a new variable
\beq\label{eta_def}\eta:=\frac{2\sqrt{K\gamma}}{\gamma-1}A^{\frac{\gamma-1}{2}}\rho^{\frac{\gamma-1}{2}}
\eeq
to take the place of $\rho$. So the state variables $v$ and $p$, which satisfies \eqref{1.1p}, can be written as
\begin{equation*}
v=K_\tau\eta^{-\frac{2}{\gamma-1}}\quad {\rm and}\quad p=K_pA^{-\gamma}\eta^{\frac{2\gamma}{\gamma-1}},
\end{equation*}
and the (Lagrangian) wave speed is
\[C:=\sqrt{-Ap_v}=\sqrt{K\gamma}\,A^{-{\frac{\gamma-1}{2}}}v^{-\frac{\gamma+1}{2}}=K_cA^{-\frac{\gamma-1}{2} }\eta^{\frac{\gamma+1}{\gamma-1}},\]
where $K_\tau, K_p$ and $K_c$ are positive constants given by
\begin{equation*}\label{Ktpc}
K_\tau:=(\frac{2\sqrt{K\gamma}}{\gamma-1})^\frac{2}{\gamma-1},\quad K_p:=KK_\tau^{-\gamma}\quad {\rm and }\quad K_c:=\sqrt{K\gamma}K_\tau^{-\frac{\gamma+1}{2}}.\end{equation*}
The forward and backward characteristics are described by
\[\frac{dx'}{dt'}=C\quad \hbox{and}\quad {\rm}\quad \frac{dx'}{dt'}=-C,\]
respectively. We denote the corresponding directional derivatives along these characteristics by
\[\partial_+:=\frac{\partial}{\partial t'}+C\frac{\partial}{\partial x'}\quad{\rm and }\quad\partial_-:=\frac{\partial}{\partial t'}-C\frac{\partial}{\partial x'}.\]
By \eqref{lt2}, it is easy to check that these derivatives are equivalent to derivatives along $1$ and $2$ characteristics  in the Eulerian coordinates, respectively,
\[
\partial_+=\frac{\partial}{\partial t}+c_2\frac{\partial}{\partial r}\quad\hbox{and}\quad
\partial_-=\frac{\partial}{\partial t}+c_1\frac{\partial}{\partial r}.
\]
We notice that along any 1 or 2-characteristic, radius $r$ is not decreasing on time since $c_1$ and $c_2$ are always nonnegative by \eqref{inbo3} when $1<\gamma\leq 3$.

In these coordinates, classical solutions of \eqref{La} satisfy the system
\begin{equation}\label{3.2}
\left\{
\begin{split}
\eta_{t'}+CA^{\frac{\gamma-1}{2}}u_{x'}=0,\\[2mm]
u_{t'}+CA^{-\frac{\gamma-1}{2}}\eta_{x'}-\gamma pA_{x'}=0,
\end{split}
\right.
\end{equation}
by \eqref{1.2+}.

\subsection{Riccati equations and key ideas}
The crucial point in \cite{G9}, to obtain the lower bound estimate on density for 1-d solutions, is to find an uniform upper bound on some gradient variables measuring compression and rarefaction.

For isentropic solutions in multiple space dimensions, what is the best way to define the rarefaction and compression, is not as clear as for solutions in one space dimension where one can use the derivate of Rieman invariants, such as in \cite{lax2,G3,G6}.

One natural good choice of gradient variables is the following one, extending from the derivatives of Riemann invariants in one space dimension, which was first given in \cite{G8},
\begin{equation}\label{ab1}
\begin{split}
&\alpha:=u_{x'}+A^{-\frac{\gamma-1}{2}}\eta_{x'},\\[2mm]
&\beta:=u_{x'}-A^{-\frac{\gamma-1}{2}}\eta_{x'}.
\end{split}
\end{equation}
These variables satisfy the following coupled Riccati equations, where the detail calculations can be found in \cite{G8},
\beq\label{Ri}
\left\{
\begin{split}
\partial_+\alpha=k_1(\alpha\beta-\alpha^2)+k_2^+(\alpha-\beta)+F,\\[2mm]
\partial_-\beta=k_1(\alpha\beta-\beta^2)+k_2^-(\beta-\alpha)+F,
\end{split}
\right.
\eeq
where
\begin{equation*}
\begin{split}
&k_1=\frac{\gamma+1}{2(\gamma-1)}K_c\eta^{\frac{2}{\gamma-1}}>0,\\
&k_2^\pm=-\frac{\gamma-1}{4}uA^{-1}\dot{A}\pm\frac{3(\gamma-1)^2}{8}\eta A^{-\frac{\gamma+1}{2}}\dot{A},\\
&F=\frac{(\gamma-1)^3}{8K_c}\eta^{\frac{2\gamma-4}{\gamma-1}}A^{-\gamma-1}(A\ddot{A}-\gamma\dot{A}^2).\\
\end{split}
\end{equation*}
Here
\[\dot{A}:=\frac{dA(r)}{dr}, \quad{\rm and}\quad \ddot{A}:=\frac{d^2A(r)}{dr^2}.\]
 In the coordinate $(r,t)$, it is easy to check that
 \begin{equation*}\label{F}
 A\ddot{A}-\gamma\dot{A}^2<0\quad\hbox{when}\quad d=2,3.\end{equation*}
 So it is easy to see that
 \beq\label{F_in}
 F<0\quad\hbox{when}\quad x'> 0.
 \eeq

Now we restrict our consideration on the case when Assumption \ref{asu_1} on the initial data is satisfied.  By Theorem \ref{thm_bounds}, we know that for any classical solutions in $(x',t')\in (0,+\infty)\times[0,T)$ satisfying initial conditions in Assumption \ref{asu_1}, it holds
\[u\geq A^{-\frac{\gamma-1}{2}}\eta,\]
where we also use the definition of $\eta$ in \eqref{eta_def}.
We thus conclude that
\begin{equation}\label{Kbound}
k^+_2
\leq\frac{\gamma-1}{8}A^{-\frac{\gamma+1}{2}}\dot{A}(-2+3(\gamma-1))\eta=\frac{(\gamma-1)(3\gamma-5)}{8}A^{-\frac{\gamma+1}{2}}\dot{A}\eta,
\end{equation}
and
\begin{equation}\label{Kbound2}
k^-_2
\leq\frac{\gamma-1}{8}A^{-\frac{\gamma+1}{2}}\dot{A}(-2-3(\gamma-1))\eta=-\frac{(\gamma-1)(3\gamma-1)}{8}A^{-\frac{\gamma+1}{2}}\dot{A}\eta<0.
\end{equation}
Furthermore, we find
\begin{equation}\label{Kbound3}
k^+_2\leq0\quad\hbox{when}\quad  1<\gamma\leq\frac{5}{3}.
\end{equation}

Using \eqref{F_in}, \eqref{Kbound2} and \eqref{Kbound3}, when $x'>0$, we have
\beq\label{Ri2}
\left\{
\begin{split}
\partial_+\alpha< k_1(\alpha\beta-\alpha^2)+k_2^+(\alpha-\beta),\\[2mm]
\partial_-\beta< k_1(\alpha\beta-\beta^2)+k_2^-(\beta-\alpha).
\end{split}
\right.
\eeq

Consider solutions on the region $(x',t')\in (0,+\infty)\times[0,T)$.
Note when $1<\gamma\leq\frac{5}{3}$, the functions $k^\pm_2$ are both negative.
Then we can directly use the idea in \cite{G9} to see that $\max(\alpha,\beta)<M$ is an invariant domain on the $(\alpha,\beta)$-plane, as shown in Figure \ref{Fig:keyproof}. In fact, it is easy to show that on the right boundary of $\{\max(\alpha,\beta)<M\}$ (except the vertex) where $\alpha=M$ and $\beta<M$, $\alpha$ strictly decays on time, i.e. $\partial_+\alpha<0$. Similarly, $\alpha$ strictly decays on time, i.e. $\partial_-\beta<0$ on the upper boundary of $\{\max(\alpha,\beta)<M\}$. As a direct consequence, $\alpha(x', t')$ and $\beta(x', t')$ have a constant upper bound $\displaystyle\max_{x'\in\mathbb{R^+}}(\alpha(x',0),\beta(x',0))$, so by \eqref{La},
\beq\label{v_key}
v_{t'}=u_{x'}=\frac{1}{2}(\alpha+\beta)<M,
\eeq
which gives a uniform linear time-dependent upper bound on $v$.

\begin{figure}[h!]
\centering
 \scalebox{0.4}{\includegraphics{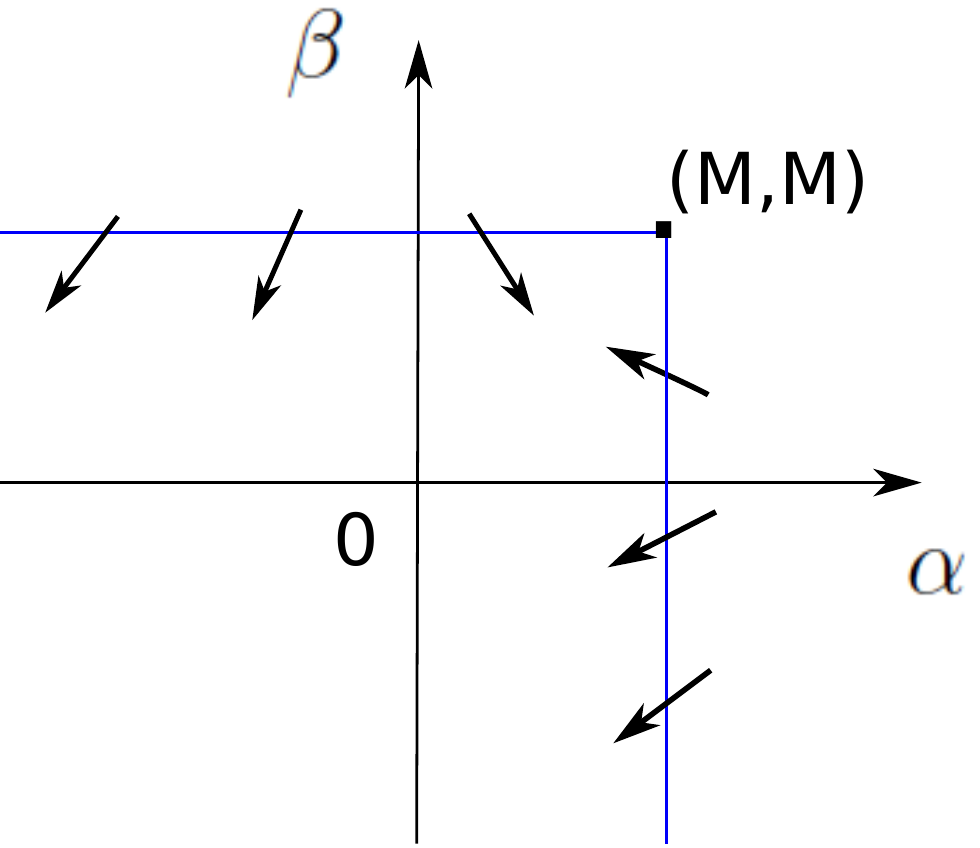}}
\caption{For any positive constant $M$,  the region $\{\max(\alpha,\beta)<M\}$ is an invariant domain when $1<\gamma\leq\frac{5}{3}$.}\label{Fig:keyproof}
\normalsize
\end{figure}


Unfortunately, for the case when  $\frac{5}{3}<\gamma<3$, the coefficient $k^+_2$ might be positive when $d=2,3$, hence
  $\{\max(\alpha,\beta)<M\}$ is not an invariant domain. Instead, we will introduce some new transformation to conquer the possible growth given by $k^+_2$ when $r$ is uniformly away from the origin. Then we can  obtain some non-uniform upper bound on $\alpha$ and $\beta$. Finally, one can also find some non-uniform upper bound on $v$ using \eqref{v_key}.

We will prove two theorems for the upper bound of $v$ for classical solutions to \eqref{La} in the next two subsections: Theorem \ref{thm_den1} when
$1<\gamma\leq\frac{5}{3}$ and Theorem \ref{thm_den2} when $\frac{5}{3}<\gamma<3$.
The upper bounds on $v$ could directly give a lower bound on density depending on $b>0$, when $r\in (b,\infty)$.


\subsection{Upper bound on $v$ when $1<\gamma\leq\frac{5}{3}$}\label{sub_4.1}
By \eqref{Ri2}, we can prove the following lemma. The proof is similar as the one in \cite{G9}. Since the proof is brief, we add the proof to make the paper self-contained.


 \begin{lma}\label{lem_den1}
 Suppose the initial conditions in Assumption \ref{asu_1} are satisfied. Let $1<\gamma\leq\frac{5}{3}$ and the positive constant $M$ be an upper bound of $\alpha(x',0)$ and $\beta(x',0)$, i.e.
 \begin{equation*}\label{M1}
 \max_{x'\in \mathbb{R^+}}\Big\{\alpha(x',0),\beta(x',0)\Big\}<M,
 \end{equation*}
 then for any $t'>0$,
  \begin{equation*}\label{M2}
 \max_{x'\in \mathbb{R^+}}\{\alpha(x',t'),\beta(x',t')\}<M.
 \end{equation*}
 \end{lma}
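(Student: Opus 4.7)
The plan is to run an invariant-region argument in the $(\alpha, \beta)$-plane, using the Riccati inequality \eqref{Ri2} together with the sign information $k_1 > 0$, $k_2^- < 0$ from \eqref{Kbound2}, and $k_2^+ \leq 0$ from \eqref{Kbound3} (available precisely because $1 < \gamma \leq \tfrac{5}{3}$).

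First I would verify that the vector field defined by \eqref{Ri2} points strictly inward along the boundary of the region $\{\max(\alpha, \beta) < M\}$. On the edge $\alpha = M$ with $\beta \leq M$, both terms on the right-hand side of the $\alpha$-inequality are non-positive: $k_1 \alpha(\beta - \alpha) = k_1 M (\beta - M) \leq 0$ (since $M > 0$), and $k_2^+(\alpha - \beta) \leq 0$ (since $k_2^+ \leq 0$ and $\alpha \geq \beta$), so the strict inequality in \eqref{Ri2} yields $\partial_+ \alpha < 0$. A symmetric calculation on the edge $\beta = M$ with $\alpha \leq M$, using $k_2^- < 0$ in place of $k_2^+ \leq 0$, gives $\partial_- \beta < 0$.

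I would then promote these infinitesimal inward-pointing statements to the lemma via a first-crossing contradiction. Define
$$t^* := \sup\bigl\{t \in [0, T) : \max(\alpha, \beta)(\cdot, t') < M \text{ for all } t' \in [0, t]\bigr\}.$$
Continuity of the $C^1$ solution and the strict initial bound give $t^* > 0$. Assuming $t^* < T$, one finds a point $x^* > 0$ where, without loss of generality, $\alpha(x^*, t^*) = M$. Tracing the forward characteristic through $(x^*, t^*)$ backward to $t' = 0$ keeps us in $r > 0$ by the no-touching-the-origin property in Theorem \ref{thm_bounds}, so \eqref{Ri2} is valid along the whole characteristic; along it $\alpha \leq M$ for $t' \leq t^*$ with equality at $t^*$, forcing $\partial_+ \alpha(x^*, t^*) \geq 0$, which contradicts the strict inward-pointing inequality just established.

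The main obstacle I anticipate is the attainment of the crossing point $x^*$ at a finite position in the unbounded spatial domain $x' \in (0, \infty)$: the supremum might only be reached in a limit. To handle this, I would replace $M$ by $M - \varepsilon$ for small $\varepsilon > 0$, work with a sequence $(x_n, t_n)$ along which the relaxed threshold is approached, apply the inward-pointing calculation asymptotically, and then pass $\varepsilon \to 0^+$ at the end.
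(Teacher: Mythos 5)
Your inward-pointing computation is correct and matches the paper's sign analysis: for $1 < \gamma \leq \tfrac{5}{3}$, $k_2^+ \leq 0$ (from \eqref{Kbound3}) and $k_2^- < 0$ (from \eqref{Kbound2}), so on the edge $\alpha = M$, $\beta \leq M$ both the quadratic term $k_1 M(\beta - M)$ and the linear term $k_2^+(M - \beta)$ in the $\alpha$-inequality of \eqref{Ri2} are $\leq 0$, and the strict inequality coming from $F < 0$ gives $\partial_+\alpha < 0$; similarly for $\beta$. The gap is in the globalization. You define $t^*$ as a first-crossing time over the \emph{entire} unbounded half-line $x' \in (0,\infty)$ and then assert the existence of a spatial point $x^*$ with $\alpha(x^*,t^*) = M$. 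On an unbounded domain this need not hold: the supremum of $\alpha(\cdot,t^*)$ could equal $M$ only in a limit as $x' \to \infty$, and even the preliminary claim $t^*>0$ requires a uniform-in-$x'$ modulus of continuity that a $C^1$ solution does not automatically supply. Your proposed $\varepsilon$-fix does not close the gap: lowering the threshold to $M-\varepsilon$ neither forces the new crossing to be attained nor produces a contradiction from a non-convergent sequence $(x_n,t_n)$, and it risks violating the initial hypothesis if $\sup_{x'}\alpha(x',0)$ is close to $M$.

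The paper avoids this entirely by localizing to a \emph{compact} set. From the contradiction hypothesis there is a single point $(x'_*,t'_*)$ with (say) $\alpha=M$; the backward characteristic triangle $\Omega_0$ with vertex $(x'_*,t'_*)$ and base on $t'=0$ is compact and, by Theorem \ref{thm_bounds}, stays away from $r=0$, so the \emph{first} crossing inside $\Omega_0$ is genuinely attained at some $(x'_1,t'_1)$. The paper then closes the argument not with a pointwise derivative-sign contradiction but with an integrated Gr\"onwall/barrier estimate $\partial_+\alpha \leq K_1(M-\alpha)$ along the forward characteristic into $(x'_1,t'_1)$, showing $\ln\frac{1}{M-\alpha}$ grows at most linearly and hence $\alpha$ cannot reach $M$ in finite time. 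If you adopt the same localization to compact characteristic triangles, your pointwise derivative contradiction at the attained first crossing would be a valid (and slightly more elementary) alternative to the Gr\"onwall step; the localization is precisely the piece your proposal is missing.
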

 \begin{proof}
 We now prove Lemma \ref{lem_den1} by contradiction. Without loss of generality, we may assume $\alpha(x'_*,t'_*)=M$, at some point $(x'_*,t'_*)$. Because wave speed $C$ is bounded on $[0,t'_*]$, then we can find the characteristic triangle with vertex $(x'_*,t'_*)$ and lower boundary on the initial line $t'=0$, denoted by $\Omega_0$. Also by Theorem \ref{thm_bounds}, we know that $\Omega_0$ will not include any part of the line $x'=0$ (equivalently $r=0$).
 See Figure \ref{Fig:1}.
\begin{figure}[h!]
\centering
\scalebox{0.5}{\includegraphics{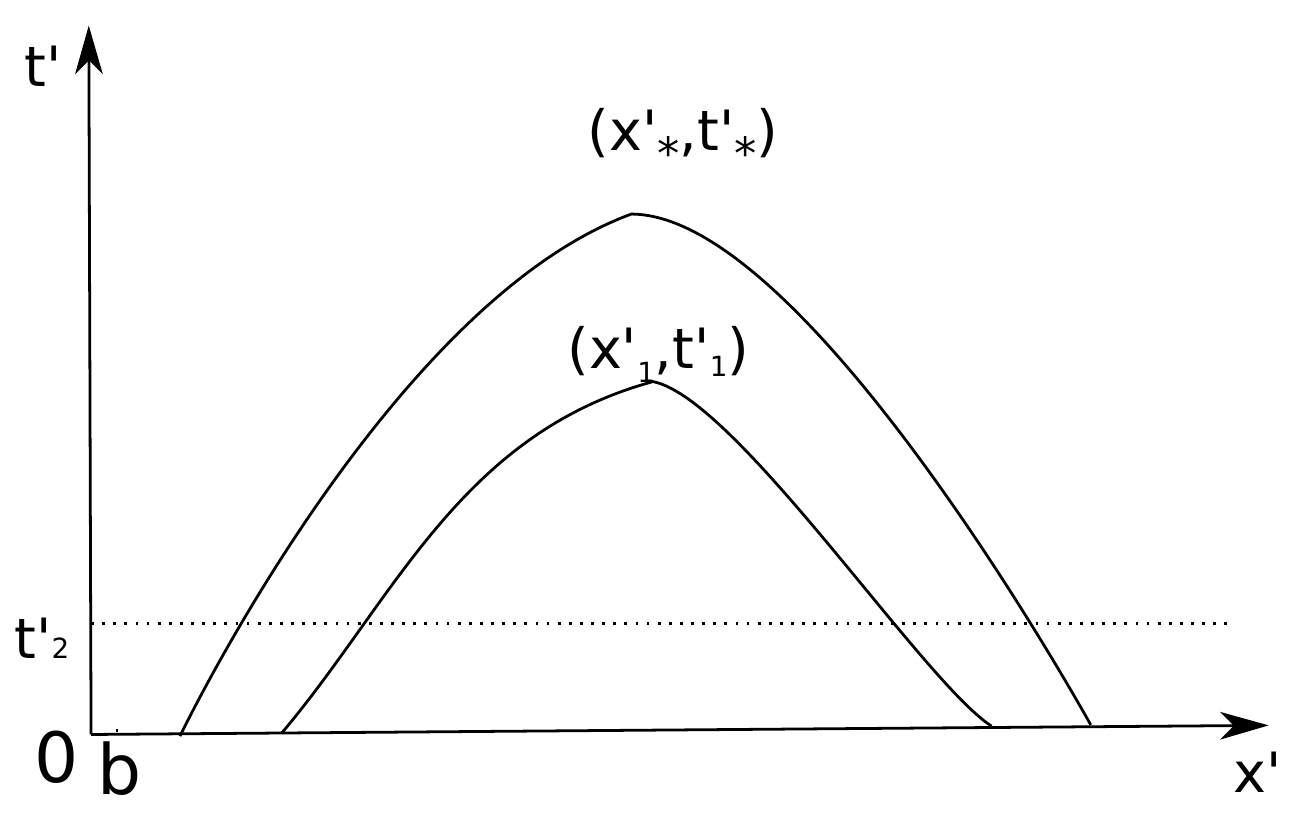}}
 \caption{Proof of Lemma \ref{lem_den1}.  }\label{Fig:1}
\normalsize
\end{figure}
In turn, we can find the first time $t'_1$ such that $\alpha(t'_1)=M$ or $\beta(t'_1)=M$ in $\Omega_0$. More precisely,
 \[\max_{(x',t')\in \Omega_0, t'\in[0,t'_1)}\{\alpha(x',t'),\beta(x',t')\}<M,\]
 with $\alpha(x'_1, t'_1)=M$ or/and $\beta(x'_1, t'_1)=M$ for some $(x'_1, t'_1)\in\Omega_0$. Without loss of generality, we still assume \[\alpha(x'_1, t'_1)=M.\] The other case can be proved similarly. Denote the characteristic triangle with vertex $(x'_1, t'_1)$ by $\Omega_1\subset \Omega_0$, then
 \[\max_{(x',t')\in \Omega_1, t'\in[0,t'_1)}\{\alpha(x',t'),\beta(x',t')\}<M,\]
and $\alpha(x'_1, t'_1)=M$. By the continuity of $\alpha$ and $\beta$, we could find a time $t'_2\in[0,t'_1)$ such that, for any $(x',t')\in\Omega_1, t'_2\leq t'<t'_1$, it holds
\begin{equation*}\label{ALB}
0<\alpha(x',t')< M .
\end{equation*}
Hence, by $\eqref{Ri2}_1$,  along the forward characteristic segment through $(x'_1,t'_1)$, when $t'_2\leq t'<t'_1$, we have, since $k_1>0$ and $k_2^+\leq 0$,
\begin{equation*}\label{A1E}
\partial_+\alpha< (k_1\alpha-k_2^+)(\beta-\alpha)\leq  (k_1\alpha-k_2^+)(M-\alpha)\leq
 K_1(M-\alpha),
\end{equation*}
for some positive constant $K_1$ depending on $M$ and the minimum value of $r$ on the piece of forward characteristic  when $t'_2\leq t'<t'_1$. Note to find $K_1$, we use that fact that both forward and backward characteristics have positive wave speeds in the Eulerian coordinates, so the lowest $r$ value on the characteristic segment considered is at $t=t'_2$. This gives, through the integration along the forward characteristic,
\beq\label{proof_1}
\ln \frac{1}{M-\alpha(t')}\leq\ln \frac{1}{M-\alpha(t'_2)}+\textstyle K_1 (t'-t'_2)\,.
\eeq
As $t'\rightarrow t'_1-$, the left hand side approaches infinity
while the right hand side approaches a finite number, which gives a contradiction.
This completes the proof of Lemma \ref{lem_den1}.
\end{proof}

Using the argument in \eqref{v_key} and Lemma \ref{lem_den1}, we immediately prove the upper bound for $v$ when $1<\gamma\leq\frac{5}{3}$ in the following theorem.

\begin{thm}\label{thm_den1}
We consider the $C^1$ solutions
$(v,u)(x',t')$ of \eqref{La} in the region $(x',t')\in (0,+\infty)\times[0,T)$, for some $T>0$, with initial data satisfying conditions in Assumption \ref{asu_1} and $\alpha(x',0)$, $\beta(x',0)$ are uniformly bounded above by a constant $M$, where $\alpha$ and $\beta$  take the form in \eqref{ab1}. If $1<\gamma\leq\frac{5}{3}$, then we have
\begin{equation*}\label{v1}
v(x',t')\leq v(x',0)+Mt'.
\end{equation*}
 \end{thm}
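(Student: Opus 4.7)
The proof is essentially an immediate consequence of Lemma \ref{lem_den1}, and I would structure it as a short, direct integration argument.

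First, I would note that the first equation of system \eqref{La} reads $v_{t'} = u_{x'}$, so the growth of $v$ along a vertical line $x' = \text{const}$ is controlled by $u_{x'}$. By the definitions of $\alpha$ and $\beta$ in \eqref{ab1}, we have the identity
\begin{equation*}
u_{x'} = \tfrac{1}{2}(\alpha + \beta),
\end{equation*}
so that $v_{t'} = \tfrac{1}{2}(\alpha + \beta)$ pointwise on $\{(x',t') \in (0,\infty) \times [0,T)\}$.

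Next, I would invoke Lemma \ref{lem_den1}: since the initial data satisfy $\max\{\alpha(x',0),\beta(x',0)\} < M$ for all $x' \in \mathbb{R}^+$ and $1 < \gamma \le \tfrac{5}{3}$, the lemma guarantees that $\max\{\alpha(x',t'),\beta(x',t')\} < M$ for every $t' \in [0,T)$ and $x' > 0$. In particular, $\tfrac{1}{2}(\alpha + \beta) < M$ on the entire region of consideration, so
\begin{equation*}
v_{t'}(x',t') < M \quad \text{for all } (x',t') \in (0,\infty) \times [0,T).
\end{equation*}

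Finally, fixing any $x' > 0$ and integrating the inequality $v_{t'} \le M$ in $t'$ from $0$ to $t'$ yields the claimed bound $v(x',t') \le v(x',0) + Mt'$. There is essentially no obstacle: the whole substance of the theorem has already been absorbed into Lemma \ref{lem_den1}, and the present argument is just the observation that the invariant-region bound on the Riccati variables translates, via the conservation law $v_{t'} = u_{x'}$, into a linear-in-time upper bound for the specific volume. The only care needed is that the argument is performed on $x' > 0$ (equivalently $r > 0$), which is exactly where Lemma \ref{lem_den1} applies and where $v$ is finite.
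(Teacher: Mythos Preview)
Your proof is correct and follows essentially the same approach as the paper: invoke Lemma \ref{lem_den1} to bound $\alpha,\beta<M$, use $v_{t'}=u_{x'}=\tfrac12(\alpha+\beta)<M$, and integrate in $t'$. The paper in fact presents this theorem as an immediate consequence of \eqref{v_key} and Lemma \ref{lem_den1}, exactly as you have written.
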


\subsection{Upper bound on $v$ when $\frac{5}{3}<\gamma<3$}\label{sub_4.2}
Similar as Theorem \eqref{thm_den1}, when $\frac{5}{3}<\gamma<3$,  the key point is still to get the uniform upper bound of some gradient variables measuring rarefaction. However, we fail to control the lower order term $k_2^+(\alpha-\beta)$ by directly using the Riccati equations $\eqref{Ri}$.

The new method is to introduce some transformation on gradient variables $\alpha$ and $\beta$:

\begin{equation}\label{ab2}\tilde{\alpha}:=\eta^\frac{3-\gamma}{\gamma-1}\alpha \quad{\rm and}\quad\tilde{\beta}:=\eta^\frac{3-\gamma}{\gamma-1}\beta .\end{equation}
According to \eqref{3.2} and \eqref{Ri}, we obtain
\begin{equation*}
\begin{cases}
\partial_+\tilde{\alpha}=\frac{3\gamma-5}{2(\gamma-1)}K_c\eta\tilde{\alpha}\tilde{\beta}-\frac{\gamma+1}{2(\gamma-1)}K_c\eta\tilde{\alpha}^2+k_2^+(\tilde{\alpha}-\tilde{\beta})+\eta^\frac{3-\gamma}{\gamma-1} F(x',t'),\\[2mm]
\partial_-\tilde{\beta}=\frac{3\gamma-5}{2(\gamma-1)}K_c\eta\tilde{\alpha}\tilde{\beta}-\frac{\gamma+1}{2(\gamma-1)}K_c\eta\tilde{\beta}^2+k_2^-(\tilde{\beta}-\tilde{\alpha})+\eta^\frac{3-\gamma}{\gamma-1} F(x',t').
\end{cases}
\end{equation*}
That is
\begin{equation}\label{NRi}
\begin{cases}
\partial_+\tilde{\alpha}=
\eta\tilde{K}_1\tilde{\alpha}(\tilde{\beta}-\tilde{\alpha})
-\eta\big(\tilde{K}_2\tilde{\alpha}^2-\tilde{k}^+(\tilde{\alpha}-\tilde{\beta})\big)+\eta^\frac{3-\gamma}{\gamma-1}F(x',t'),\\[2mm]
\partial_-\tilde{\beta}=\eta\tilde{K}_1\tilde{\beta}(\tilde{\alpha}-\tilde{\beta})
-\eta\big(\tilde{K}_2\tilde{\beta}^2-\tilde{k}^-(\tilde{\beta}-\tilde{\alpha})\big)+\eta^\frac{3-\gamma}{\gamma-1} F(x',t'),
\end{cases}
\end{equation}
with
\[\tilde{K}_1=\frac{3\gamma-5}{2(\gamma-1)}K_c,\quad
\tilde{K}_2=\frac{3-\gamma}{\gamma-1}K_c,\]
and
\[\tilde{k}^\pm=-\frac{(\gamma-1)u}{4\eta}A^{-1}\dot{A}\pm\frac{3(\gamma-1)^2}{8} A^{-\frac{\gamma+1}{2}}\dot{A}.\]

Similar as in \eqref{Kbound} and \eqref{Kbound2}, we know that when $x'>0$ or $r(x',t')>0$
\[\tilde{k}^-(x',t')<0,\]
but
$\tilde{k}^+(x',t')$ might be positive.
The nice thing happens in $\eqref{NRi}_1$ is that in any domain of dependence away from the origin, when $\tilde\alpha$ is large enough (depending on the domain) then the second term in $\eqref{NRi}_1$ becomes negative. Hence, the value of $\tilde\alpha$ will not become very large.

Now we define the domain of dependence $\Omega$ as in Figure \ref{Fig:domain}:
\beq\label{ome}
\Omega=\{(x',t')\,|\,x'\geq x^+(t';\bar x'),\ t'\in \mathbb R^+\},
\eeq
where the left boundary characteristic of $\Omega$ is a 2-characteristic starting from the point $(\bar x',0)$, i.e. the characteristic satisfies
\[
\frac{d}{dt'} x^+(t';\bar x')=C(x^+(t';\bar x'),t'),\qquad  x^+(0;\bar x')=\bar x'.
\]

 \begin{figure}[h!]
\centering
 \scalebox{0.4}{\includegraphics{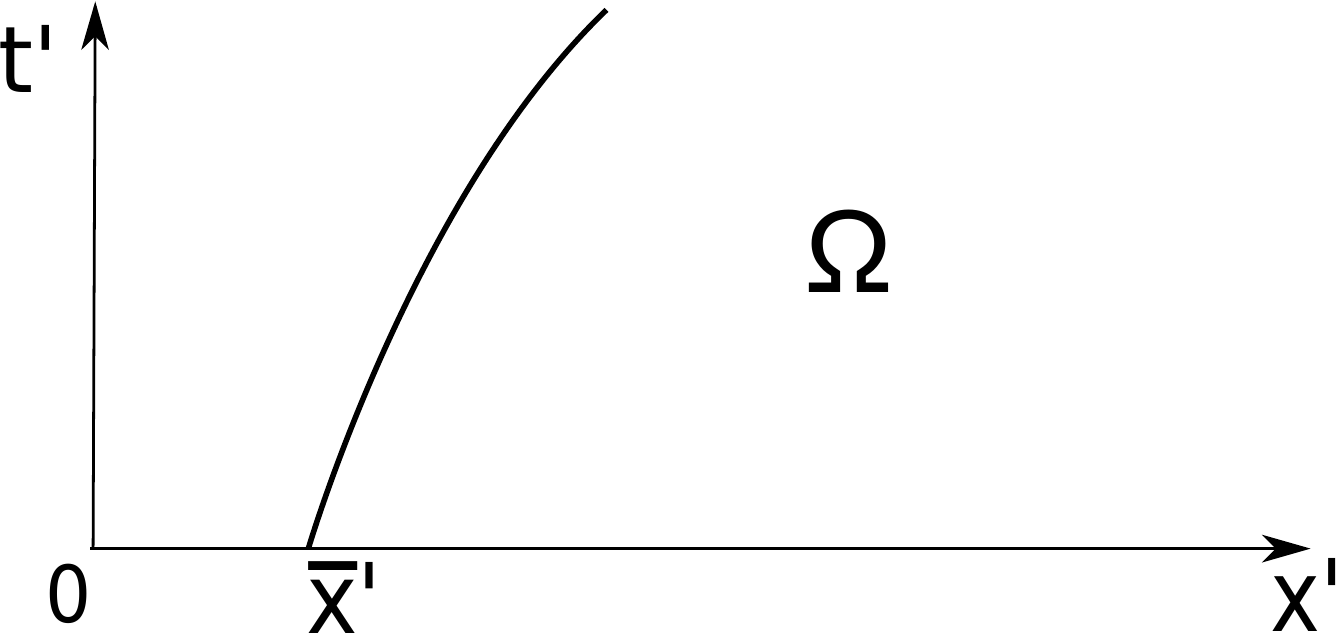}}
\caption{Upper bound on $v$ when $\frac{5}{3}<\gamma<3$ in the domain of dependence $\Omega$. }\label{Fig:domain}
\normalsize
\end{figure}

We first recall that along any 1 or 2-characteristic, the radius $r$ is not decreasing on time since $c_1$ and $c_2$ in the Eulerian coordinates are always nonnegative by \eqref{inbo3} when $1<\gamma\leq 3$, and the characteristics in Eulerian and Lagrangian coordinates are equivalent under transformation. So
\beq\label{b_def}
\min_{(x',t')\in\Omega}r(x',t')=r(\bar x', 0),
\eeq
where we also use the fact that $r(x', 0)>r(\bar x',0)$ when $x'>\bar x'$.
By Theorem \ref{thm_bounds}, it holds that
\begin{equation}\label{Kbound4}
\begin{split}
\tilde{k}^+(x',t')\leq\frac{(\gamma-1)(3\gamma-5)}{8}A^{-\frac{\gamma+1}{2}}\dot{A}&\leq \frac{(\gamma-1)(3\gamma-5)(d-1)}{8}\,r(\bar x', 0)^{\frac{d(1-\gamma)+\gamma-3}{2}}\\
& =:\hat{K}(\bar x'),
\end{split}\end{equation}
when $(x',t')\in \Omega$ and $\frac{5}{3}<\gamma<3$. We will sometimes use $\hat{K}$ to denote $\hat{K}(\bar x')$ if there is no ambiguity.

\begin{definition}\label{def3}
When $\frac{5}{3}<\gamma<3$, we use $N(\bar x')$ to denote a constant depending on $\bar x'$ larger than both
 \begin{equation*}
 \max_{x'\in \Omega}\Big\{\tilde{\alpha}(x',0),\tilde{\beta}(x',0)\Big\}
 \end{equation*}
and
 \begin{equation}\label{Mbound}
\textstyle\max\Big\{\frac{8(\gamma-1)}{(3-\gamma)K_c},\frac{4(\gamma-1)}{(3\gamma-5)K_c}\Big\}\cdot\hat{K}(\bar x'),
 \end{equation}
 where $\hat{K}(\bar x')$  is defined in \eqref{Kbound4}.
\end{definition}

 \begin{thm}\label{thm_den2} Suppose the initial conditions in Assumption \ref{asu_1} are satisfied. Let  $\frac{5}{3}<\gamma<3$.
 We consider the $C^1$ solution in the domain $\Omega$ when $0<t<T$ defined in \eqref{ome}, with left-below vertex $(\bar x',0)$. Then
  \begin{equation}\label{4.27}
 \max_{(x',t')\in\Omega}\{\tilde{\alpha}(x',t'),\tilde{\beta}(x',t')\}<N(\bar x'),
 \end{equation}
 for any  $0<t'<T$. Here $N(\bar x')$ is independent of $T$, and $T$ can be infinity.

 And for any $(x',t')\in \Omega$ with $0<t'<T$,
 \begin{equation}\label{v2}
v^{\frac{\gamma-1}{2}}(x',t')\leq v^{\frac{\gamma-1}{2}}(x',0)+\sqrt{K\gamma}K_\tau^{-1}N(\bar x')t'.
\end{equation}
 \end{thm}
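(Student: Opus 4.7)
The proof of \eqref{4.27} will follow the contradiction scheme of Lemma \ref{lem_den1}, applied to the rescaled gradient variables $\tilde\alpha,\tilde\beta$ from \eqref{NRi} and restricted to the domain $\Omega$, where by \eqref{b_def} the radius satisfies $r\geq r(\bar x',0)>0$. The key structural benefit of \eqref{NRi} is the new quadratic damping $-\eta\tilde K_2\tilde\alpha^2$ (and its $\tilde\beta$-analogue): since $\tfrac53<\gamma<3$, both $\tilde K_1$ and $\tilde K_2$ are strictly positive, so this term should absorb the possibly positive contribution of $\tilde k^+(\tilde\alpha-\tilde\beta)$ once $N(\bar x')$ is chosen as in Definition \ref{def3}.

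Concretely, I would assume by contradiction that \eqref{4.27} fails, select a first violating point, and build a backward characteristic triangle with that vertex (the wave speed $C$ is bounded on any compact time interval by Theorem \ref{thm_bounds}). Inside this triangle let $(x'_1,t'_1)$ be the first point in time at which either $\tilde\alpha$ or $\tilde\beta$ touches $N(\bar x')$. Without loss of generality $\tilde\alpha(x'_1,t'_1)=N$ and $\tilde\beta(x'_1,t'_1)\leq N$. Rearranging $\eqref{NRi}_1$ at this point yields
\[
\partial_+\tilde\alpha(x'_1,t'_1)=-\eta\bigl(\tilde K_1 N-\tilde k^+\bigr)(N-\tilde\beta)-\eta\tilde K_2 N^2+\eta^{(3-\gamma)/(\gamma-1)}F.
\]
Here $\eta(x'_1,t'_1)>0$ because $r\geq r(\bar x',0)>0$ throughout $\Omega$ (so $\rho>0$ there by Theorem \ref{thm_bounds}); combining $\tilde k^+\leq\hat K(\bar x')$ from \eqref{Kbound4} with the lower bound $N>\tfrac{4(\gamma-1)}{(3\gamma-5)K_c}\hat K$ of Definition \ref{def3} forces $\tilde K_1 N>\tilde k^+$, so the first term on the right is $\leq 0$; the second is strictly negative since $\tilde K_2>0$; and the third is negative by \eqref{F_in}. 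Hence $\partial_+\tilde\alpha(x'_1,t'_1)<0$, which by continuity along the forward characteristic forces $\tilde\alpha(t'_1-\varepsilon)>N$ for small $\varepsilon>0$, contradicting the defining property of $t'_1$. The case $\tilde\beta(x'_1,t'_1)=N$ is parallel, and in fact easier, because $\tilde k^-<0$ (the $\tilde\beta$-analogue of \eqref{Kbound2}) makes $\tilde K_1 N-\tilde k^->0$ automatic, independently of any size of $N$.

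With \eqref{4.27} in hand, \eqref{v2} follows by integrating a transport identity in $t'$. Writing $v^{(\gamma-1)/2}=K_\tau^{(\gamma-1)/2}\eta^{-1}$, differentiating in $t'$, and substituting $\eqref{3.2}_1$ together with $u_{x'}=\tfrac12(\alpha+\beta)=\tfrac12\eta^{-(3-\gamma)/(\gamma-1)}(\tilde\alpha+\tilde\beta)$ and the identity $K_\tau^{(\gamma-1)/2}K_c=\sqrt{K\gamma}\,K_\tau^{-1}$, I get
\[
\partial_{t'}v^{(\gamma-1)/2}=K_\tau^{(\gamma-1)/2}K_c\cdot\tfrac12(\tilde\alpha+\tilde\beta)<\sqrt{K\gamma}\,K_\tau^{-1}\,N(\bar x'),
\]
and integrating in $t'$ gives \eqref{v2}. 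The main obstacle is the sign calculation at $(x'_1,t'_1)$: the indefinite-sign term $\tilde k^+(\tilde\alpha-\tilde\beta)$ in \eqref{NRi} cannot be dominated by the ``good'' cross term $\eta\tilde K_1\tilde\alpha(\tilde\beta-\tilde\alpha)$ alone, and it is only the new quadratic damping produced by the transformation $\alpha\mapsto\eta^{(3-\gamma)/(\gamma-1)}\alpha$, together with the quantitative choice of $N(\bar x')$ in Definition \ref{def3}, that makes the argument close; this is also what restricts the construction to a domain of dependence $\Omega$ on which $r$ is bounded away from the origin, so that $\hat K(\bar x')$ is finite.
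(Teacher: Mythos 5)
Your proof is correct, and it takes a genuinely cleaner route for the key estimate \eqref{4.27} than the paper does. Both arguments share the same setup: contradiction, a domain-of-dependence characteristic triangle, the first time $t'_1$ at which $\tilde\alpha$ or $\tilde\beta$ touches $N(\bar x')$, and the same quantitative choice of $N(\bar x')$ from Definition \ref{def3} relative to $\hat K(\bar x')$. But the paper then splits into two cases depending on the size of $\tilde\beta$ at the touching point: in Case I ($-N/2 < \tilde\beta < N$) it controls the cross term $\eta\tilde K_1\tilde\alpha(\tilde\beta-\tilde\alpha)$ only by $\eta\tilde K_1\tilde\alpha(N-\tilde\alpha)$ and closes via a Gronwall/log-integration as in Lemma \ref{lem_den1}; in Case II ($\tilde\beta\leq -N/2$) it proves $\partial_+\tilde\alpha<0$ directly on a short time interval below $t'_1$. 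Your argument evaluates the Riccati identity at the single extremal point $\tilde\alpha=N\geq\tilde\beta$, where the cross term is automatically nonpositive, so after absorbing $\tilde k^+(N-\tilde\beta)$ into it (using $N>\tfrac{4(\gamma-1)}{(3\gamma-5)K_c}\hat K$ to get $\tilde K_1 N>\tilde k^+$) and using the strict negativity of $-\eta\tilde K_2 N^2$ and of $F$, you obtain a strict sign $\partial_+\tilde\alpha(x'_1,t'_1)<0$ without any case split or Gronwall step; the $\tilde\beta$ case is even more immediate because $\tilde k^-<0$. The algebraic rearrangement you use checks out, as does the identity $K_\tau^{(\gamma-1)/2}K_c=\sqrt{K\gamma}K_\tau^{-1}$ in the derivation of \eqref{v2}, so your argument is a valid (and somewhat shorter) proof of the theorem; the paper's version does supply slightly more information, namely an exponential lower bound on the gap $N-\tilde\alpha$ in Case I, but that is not needed for the statement.
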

 \begin{proof}
For brief, we just use $N$ to denote $N(\bar x')$ in the proof of this lemma. To guide the proof, the readers can still use Figure \ref{Fig:1}.

 We first prove \eqref{4.27} by contradiction. Without loss of generality, assume that $\tilde{\alpha}(x'_*,t'_*)=N$, at some point $(x'_*,t'_*)$. Since wave speed $C$ is bounded on $[0,t'_*]$, we can find the characteristic triangle with vertex $(x'_*,t'_*)$ and lower boundary on the initial line $t'=0$, denoted by $\Omega_2\subset \Omega$.
 Then we can find the first time $t'_1$ such that $\tilde{\alpha}(t'_1)=N$ or $\tilde{\beta}(t'_1)=N$ in $\Omega_2$. More precisely,
 \[\max_{(x',t')\in \Omega_2, t'\in[0,t'_1)}\{\tilde{\alpha}(x',t'),\tilde{\beta}(x',t')\}<N,\]
 with $\tilde{\alpha}(x'_1, t'_1)=N$ or/and $\tilde{\beta}(x'_1, t'_1)=N$ for some $(x'_1, t'_1)\in\Omega_2$. Without loss of generality, we still assume
 \[\tilde{\alpha}(x'_1, t'_1)=N.\]
 Denote the characteristic triangle with vertex $(x'_1, t'_1)$ by $\Omega_3\subset \Omega_2$, then
 \[\max_{(x',t')\in \Omega_3, t'\in[0,t'_1)}\{\tilde{\alpha}(x',t'),\tilde{\beta}(x',t')\}<N,\]
and $\tilde{\alpha}(x'_1, t'_1)=N$. Now, we divide the problem into two cases:

{\it Case I. $-\frac{N}{2}<\tilde{\beta}(x',t')<N$.} By the continuity of $\tilde{\alpha}$ and $\tilde{\beta}$ and our construction, we could find a time $t'_2\in[0,t'_1)$ such that, for any $(x',t')\in\Omega_3, t'_2\leq t'<t'_1$, we have
\begin{equation}\label{casei}
\frac{N}{2}<\tilde{\alpha}(x',t')< N, \quad |\tilde{\beta}(x',t')|<N.
\end{equation}
Using \eqref{F_in}, $\eqref{NRi}_1$,  \eqref{Kbound4}, \eqref{Mbound}  and  \eqref{casei}, along the forward characteristic segment through $(x'_1,t'_1)$, when $t'_2\leq t'<t'_1$, one has
\begin{equation*}\label{caseie}
\begin{split}
\partial_+\tilde{\alpha}&\textstyle\leq \tilde{K}_1\eta(\tilde{\alpha}\tilde{\beta}-\tilde{\alpha}^2)+\eta\Big(-\frac{3-\gamma}{\gamma-1}K_c\tilde{\alpha}^2+\tilde{k}^+(\tilde{\alpha}-\tilde{\beta})\Big)\\[2mm]
&\leq \tilde{K}_1\eta(\tilde{\alpha}\tilde{\beta}-\tilde{\alpha}^2)\leq \tilde{K}_1\eta\tilde{\alpha}(N-\tilde{\alpha})\leq  L N(N-\tilde{\alpha}).
\end{split}
\end{equation*}
for some positive constant $L$.
Then similar as \eqref{proof_1}, we can find a contradiction.

{\it Case II. $\tilde{\beta}(x',t')\leq -\frac{N}{2}.$} By the continuity of $\tilde{\alpha}$ and $\tilde{\beta}$ and our construction, we could find a time $t'_3\in[t'_0,t'_1)$ such that
\begin{equation}\label{caseii}
\frac{N}{2}<\tilde{\alpha}(x',t')< N\quad{\rm and} \quad \tilde{\beta}<-\frac{N}{4}, \quad {\rm for~ any~ } (x',t')\in\Omega_3,\  t'_3\leq t'<t'_1.
\end{equation}
Thus, using \eqref{Kbound4}, \eqref{Mbound}
and \eqref{caseii}, we obtain
\[\tilde{K}_1\tilde{\alpha}\tilde{\beta}-\tilde{k}^+\tilde{\beta}=(\tilde{K}_1\tilde{\alpha}-\tilde{k}^+)\tilde{\beta}<0.\]
Using this inequality and also by \eqref{F_in}, $\eqref{NRi}_1$, \eqref{Kbound4}, \eqref{Mbound} and  \eqref{caseii}, we have
\begin{equation*}\label{caseiie}
\textstyle\partial_+\tilde{\alpha}\leq\eta\Big(-\frac{\gamma+1}{2(\gamma-1)}K_c\tilde{\alpha}^2+\tilde{k}^+\tilde{\alpha}\Big)<0,
\end{equation*}
which contradicts to that $\tilde{\alpha}(x'_1,t'_1)=N$ while $\tilde{\alpha}(x',t')< N$ when $(x',t')\in\Omega_3,\ t'_3\leq t'<t'_1$. This completes the proof of \eqref{4.27}.

Finally, we prove the upper bound on $v$.
By \eqref{La} and \eqref{4.27}, we have
\begin{equation*}
\eta^\frac{3-\gamma}{\gamma-1}v_{t'}=\eta^\frac{3-\gamma}{\gamma-1}u_{x'}=\frac{1}{2}(\tilde{\alpha}+\tilde{\beta})<N(\bar x'),
\end{equation*}
in $\Omega$,
that is
\[
\textstyle(v^{\frac{\gamma-1}{2}})_{t'}\leq \frac{\gamma-1}{2}\big(\frac{\gamma-1}{2\sqrt{K\gamma}}\big)^\frac{3-\gamma}{\gamma-1}N(\bar x')\]
which directly gives \eqref{v2}.
\end{proof}


\section{Singularity formation}\label{sec_5}
Now we are in a position to study the singularity formation for system \eqref{La}. Here we skip the case when $d=1$, where a complete resolution on the shock formation can be found in \cite{G9,CPZ}. So in this section, $d=2$ or $3$.

In what follows, we will establish the finite time singularity formation results for $1<\gamma\leq 3$ in multiple  space dimension when the initial compression is slightly stronger than a critical value. In order to control the solutions of the decoupled Riccati equations, we shall use the estimates in Theorem \ref{thm_den1} and \ref{thm_den2}. We will follow the notations used in section \ref{sec_4}.

\subsection{Singularity formation for $\gamma\neq \frac{5}{3}$ and $\gamma\neq 3$}\label{subsub_1}
As in \cite{G3, G8}, system \eqref{Ri} can be rewritten as a decoupled system with varying coefficients. We define when $\gamma\neq \frac{5}{3}$ and $\gamma\neq 3$,
\begin{equation}\label{YQ}
\begin{split}
&\textstyle Y:=\eta^{\frac{\gamma+1}{2(\gamma-1)}}\alpha+\frac{(\gamma-1)^2}{2K_c(\gamma-3)}A^{-1}\dot{A}u\eta^{\frac{\gamma-3}{2(\gamma-1)}}-\frac{(3\gamma-13)(\gamma-1)^3}{4K_c(\gamma-3)(3\gamma-5)}A^{-\frac{\gamma+1}{2}}\dot{A}\eta^{\frac{3\gamma-5}{2(\gamma-1)}},\\[2mm]
&\textstyle Q:=\eta^{\frac{\gamma+1}{2(\gamma-1)}}\beta+\frac{(\gamma-1)^2}{2K_c(\gamma-3)}A^{-1}\dot{A}u\eta^{\frac{\gamma-3}{2(\gamma-1)}}+\frac{(3\gamma-13)(\gamma-1)^3}{4K_c(\gamma-3)(3\gamma-5)}A^{-\frac{\gamma+1}{2}}\dot{A}\eta^{\frac{3\gamma-5}{2(\gamma-1)}},
\end{split}
\end{equation}
which satisfy equations
\begin{equation}\label{DRi}
\begin{cases}
\partial_+Y=d_0+d_1Y+d_2Y^2,\\
\partial_-Q=\bar{d}_0+\bar{d}_1Q+d_2Q^2,
\end{cases}
\end{equation}
with coefficients
\begin{equation*}
\begin{split}
d_2&=-\frac{\gamma+1}{2(\gamma-1)}K_c \eta^{\frac{3-\gamma}{2(\gamma-1)}},\\
d_1&=L_1A^{-1}\dot{A}u+L_2A^{-\frac{\gamma+1}{2}}\dot{A}\eta,\\
d_0&=L_3A^{-\frac{\gamma+3}{2}}\dot{A}^2u\eta^{\frac{3\gamma-5}{2(\gamma-1)}}+L_4A^{-\frac{\gamma+1}{2}}\ddot{A}u\eta^{\frac{3\gamma-5}{2(\gamma-1)}}+L_5A^{-(\gamma+1)}\dot{A}^2\eta^{\frac{5\gamma-7}{2(\gamma-1)}}\\
&\quad+L_6A^{-\gamma}\ddot{A}\eta^{\frac{5\gamma-7}{2(\gamma-1)}}
+L_7A^{-1}\ddot{A}u^2\eta^{\frac{\gamma-3}{2(\gamma-1)}}
+L_8A^{-2}\dot{A}^2u^2\eta^{\frac{\gamma-3}{2(\gamma-1)}},\\
\bar{d}_1&=\bar{L}_1A^{-1}\dot{A}u+\bar{L}_2A^{-\frac{\gamma+1}{2}}\dot{A}\eta,\\
\bar{d}_0&=\bar{L}_3A^{-\frac{\gamma+3}{2}}\dot{A}^2u\eta^{\frac{3\gamma-5}{2(\gamma-1)}}+\bar{L}_4A^{-\frac{\gamma+1}{2}}\ddot{A}u\eta^{\frac{3\gamma-5}{2(\gamma-1)}}+\bar{L}_5A^{-(\gamma+1)}\dot{A}^2\eta^{\frac{5\gamma-7}{2(\gamma-1)}}\\
&\quad+\bar{L}_6A^{-\gamma}\ddot{A}\eta^{\frac{5\gamma-7}{2(\gamma-1)}}+\bar{L}_7A^{-1}\ddot{A}u^2\eta^{\frac{\gamma-3}{2(\gamma-1)}}+\bar{L}_8A^{-2}\dot{A}^2u^2\eta^{\frac{\gamma-3}{2(\gamma-1)}},\\
\end{split}
\end{equation*}
where $L_j$ and $\bar{L}_j$, $i,j=1,2\cdots,8$ are constants depending only on $\gamma$. The derivation of this system can be found in \cite{G8}. Then system \eqref{DRi} can be further  written as
\begin{equation}\label{DRIE}
\begin{cases}
\partial_+Y=d_0+d_1Y+\frac{1}{2}d_2Y^2+\frac{1}{2}d_2Y^2,\\[2mm]
\partial_-Q=\bar{d}_0+\bar{d}_1Q+\frac{1}{2}d_2Q^2+\frac{1}{2}d_2Q^2.
\end{cases}
\end{equation}
 The roots of $d_0+d_1Y+\frac{1}{2}d_2Y^2=0$ and $\bar{d}_0+\bar{d}_1Q+\frac{1}{2}d_2Q^2=0$, if they exist, are given by the quadratic formula
\begin{equation}\label{Y}
Y_\pm=\frac{-d_1\pm\sqrt{d_1^2-2d_0d_2}}{d_2}
\end{equation}
 and \begin{equation}\label{Q}
 Q_\pm=\frac{-\bar{d}_1\pm\sqrt{\bar{d}_1^2-2\bar{d}_0 d_2}}{d_2}.\end{equation}
 The structure of  the roots $Y_\pm$ and $Q_\pm$ lead us to study the ratios $\frac{d_0}{d_1}$ and $\frac{d_0}{d_2}$ which dominate behaviors of solutions. A direct calculation is carried out as follows.
Since $A=r^{d-1}$ and $\dot{A}=\frac{dA(r)}{dr}$, the ratios are
\begin{equation*}
\begin{split}
\frac{d_1}{d_2}
&
=r^{-1}\eta^{\frac{\gamma-3}{2(\gamma-1)}}(L_9u+L_{10} r^{\frac{(1-\gamma)(d-1)}{2}}\eta),\\[2mm]
\frac{d_0}{d_2}
&=r^{-2}\eta^{\frac{\gamma-3}{\gamma-1}}(L_{11}r^{\frac{(1-\gamma)(d-1)}{2}}u\eta+L_{12}r^{(1-\gamma)(d-1)}\eta^2+\hat{L}_5u^2),
\end{split}\end{equation*}
when $d>1$, where $L_i$ are constants depending only on $\gamma$ and $d$. Thus, the roots
\begin{equation}\label{YE}
\begin{split}
Y_\pm&=\frac{-d_1\pm\sqrt{d_1^2-2d_0d_2}}{d_2}\geq -r^{-1}\eta^{\frac{\gamma-3}{2(\gamma-1)}}(L_{13}u+L_{14}r^{\frac{(1-\gamma)(d-1)}{2}}\eta),
\end{split}\end{equation}
for some positive constants $L_{13}$ and $L_{14}$ depending only on $\gamma$ and $d$. Similar bounds hold for $Q_{\pm}$.

In a summary, we can find two positive constants $L_{15},L_{16}$ depending only on $\gamma$ and $d$, such that
\begin{equation}\label{YQE}
\begin{split}
-\frac{2d_1}{d_2},-\frac{2\bar{d}_1}{d_2},Y_\pm,Q_\pm&\geq -r^{-1}\eta^{\frac{\gamma-3}{2(\gamma-1)}}(L_{15}u+L_{16} r^{\frac{(1-\gamma)(d-1)}{2}}\eta).
\end{split}\end{equation}

Now we prove the singularity formation theorem. First, we define
\begin{equation}\label{GT}
G_T(\bar{x}')\equiv G_{\gamma,T}(\bar{x}'):=
\begin{cases}
C_1\,(r(\bar{x}',0))^{-1}K_\tau^{\frac{\gamma-3}{4}}\big(\bar{v}+MT\big)^{\frac{3-\gamma}{4}}
, \quad{\rm if}\quad 1<\gamma<\frac{5}{3},\\[2mm]
C_1\,(r(\bar{x}',0))^{-1}K_\tau^{\frac{\gamma-3}{4}}\big(\bar{v}^\frac{\gamma-1}{2}
+\sqrt{K\gamma}K^{-1}_\tau N(\bar{x}')T\big)^{\frac{3-\gamma}{2(\gamma-1)}},\\
\quad\quad\quad\quad\quad\quad\quad\quad\quad\quad\quad\quad\quad\quad\quad\quad{\rm if}\quad \frac{5}{3}<\gamma<3,\\
\end{cases}
\end{equation}
with $C_1=2C_0(L_{15}+L_{16})$ and
\[\displaystyle\bar{v}=\frac{1}{\displaystyle\min_{x'\in(\bar{x}',+\infty)}A(r(x',0))\rho(x',0)}.\]
Here it is easy to check that $G_T(\bar x')$ is a decreasing function on $\bar x'$.

The main result reads as follows:

\begin{thm}\label{thm_main}
For $1<\gamma< 3$ and $\gamma\neq \frac{5}{3},$ assume the initial data satisfy conditions in Assumption \ref{asu_1} and
$(\alpha(x',0)$, $\beta(x',0)$, $\tilde{\alpha}(x',0)$, $\tilde{\beta}(x',0))$ are all uniformly bounded in $d=2,3$. For any $C^1$ solution of \eqref{La}, if there exists some $\bar{x}'>0$ and $T$ satisfying
 \begin{equation}\label{Test}
T\geq \frac{4K_{\tau}r(\bar{x}',0)}{C_0(L_{15}+L_{16})(\gamma+1)},
\end{equation}
 such that
\begin{equation}\label{GTcritical}
Y(\bar{x}',0)<-G_T(\bar{x}'),
\end{equation}
then classical solution will break down before $t=T$.
\end{thm}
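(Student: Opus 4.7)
The plan is to exploit the decoupled Riccati form \eqref{DRIE} and show that the hypothesis $Y(\bar x',0)<-G_T(\bar x')$ forces $Y$ to stay strictly below the smaller root $Y_-$ of the quadratic $d_0+d_1Y+\tfrac12 d_2 Y^2=0$ along the forward $+$-characteristic emanating from $(\bar x',0)$, after which the dominant term $\tfrac12 d_2 Y^2$ drives $Y$ to $-\infty$ in finite time. Rewriting the equation as
\[
\partial_+ Y = \tfrac12 d_2 (Y-Y_+)(Y-Y_-) + \tfrac12 d_2 Y^2,
\]
and noting that $d_2=-\tfrac{\gamma+1}{2(\gamma-1)}K_c\eta^{(3-\gamma)/(2(\gamma-1))}<0$, as long as $Y<Y_-$ the two factors in $(Y-Y_+)(Y-Y_-)$ are negative, so $\tfrac12 d_2(Y-Y_+)(Y-Y_-)\le 0$ and hence $\partial_+ Y \le \tfrac12 d_2 Y^2<0$.

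First I would establish the a priori bound $Y_-(x'(t'),t')\ge -G_T(\bar x')$ along the $+$-characteristic for $t'\in[0,T]$. The estimate \eqref{YQE} on $Y_-$, together with the identity $\eta^{(\gamma-3)/(2(\gamma-1))}=K_\tau^{(\gamma-3)/4}v^{(3-\gamma)/4}$ coming from $\eta=(K_\tau/v)^{(\gamma-1)/2}$, and the supersonic inequality $A^{(1-\gamma)/2}\eta=\tfrac{2}{\gamma-1}h\le u$ from \eqref{inbo3}, gives
\[
-Y_- \le C_1\, r^{-1}\, K_\tau^{(\gamma-3)/4}\, v^{(3-\gamma)/4}, \qquad C_1 = 2C_0(L_{15}+L_{16}).
\]
Because $c_1,c_2\ge 0$ by \eqref{inbo3} when $1<\gamma\le 3$, the radius $r$ is nondecreasing along both families of characteristics, so $r\ge r(\bar x',0)$ all along the trajectory. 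The linear-in-$t'$ bounds on $v$ from Section \ref{sec_4} (Theorem \ref{thm_den1} when $\gamma<5/3$; Theorem \ref{thm_den2} on the domain $\Omega$ with vertex $(\bar x',0)$ when $\gamma>5/3$) then replace $v^{(3-\gamma)/4}$ by exactly the factor appearing in \eqref{GT}, yielding $-Y_-\le G_T(\bar x')$ along the characteristic.

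Next I would run a bootstrap to propagate $Y<-G_T$. Suppose the solution is $C^1$ on $[0,T]$, and let $t^\sharp\in(0,T]$ be the first time along the characteristic at which $Y$ equals $-G_T(\bar x')$. Since $Y(\bar x',0)<-G_T$, for $t'\in[0,t^\sharp)$ one has $Y<-G_T\le Y_-$, so $\partial_+ Y<0$, forcing $Y(t^\sharp)<Y(0)<-G_T$, a contradiction. Thus $Y(t')<-G_T$ for every $t'$ preceding the breakdown. On this interval,
\[
\partial_+ Y \le \tfrac12 d_2 Y^2 \le -K\, Y^2,\qquad K = \tfrac{\gamma+1}{4(\gamma-1)} K_c K_\tau^{(3-\gamma)/4}(\bar v+MT)^{-(3-\gamma)/4}
\]
(with the analogous expression involving $N(\bar x')$ when $\gamma>5/3$), since the same upper bound on $v$ gives a lower bound on $|d_2|$. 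Integrating $\partial_+(1/Y)\ge K$ along the characteristic and using $|Y(0)|>G_T$ produces a blowup no later than $1/(K\,G_T(\bar x'))$, and a direct computation shows that $K\,G_T(\bar x')\,T\ge 1$ reduces precisely to the threshold \eqref{Test} (the $K_\tau^{(\gamma-3)/4}$ and $(\bar v+MT)^{(3-\gamma)/4}$ factors in $G_T$ and $K$ cancel, leaving a purely geometric condition on $r(\bar x',0)$). This contradicts the assumed $C^1$ regularity and yields breakdown before $t=T$.

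The main obstacle is the joint control of two competing time-varying quantities: the barrier $G_T$ and the dynamical variable $Y$. Two structural facts make the argument work. First, the right-hand side of \eqref{DRIE} must be split as $\tfrac12 d_2(Y-Y_+)(Y-Y_-)+\tfrac12 d_2 Y^2$ so that one half absorbs the sign-indefinite lower-order pieces $d_0+d_1Y$ while the other half retains enough negativity to power the Riccati blowup; this is the mechanism that couples the initial-data threshold $-G_T$ to the lifespan bound \eqref{Test}. Second, the upper bounds on $v$ in Section \ref{sec_4} are valid along the $+$-characteristic only because it coincides with the left boundary of the domain $\Omega$ in Theorem \ref{thm_den2} (for $\gamma>5/3$) and because the positivity $c_1,c_2\ge 0$ keeps $r$ bounded below by $r(\bar x',0)$—a feature that fails without the expanding-wave hypothesis \eqref{inbo3} and hence makes the supersonic condition essential.
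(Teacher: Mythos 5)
Your proposal follows the paper's proof strategy: bound $v$ from above (and hence $|d_2|$ from below and $-G_T$ as a barrier) along the forward characteristic using Section \ref{sec_4}, show that $Y$ stays below the roots so that $\partial_+Y\le \tfrac12 d_2 Y^2$, integrate, and check that \eqref{Test} forces blowup before $T$. The constant-cancellation at the end and the reduction of $K\,G_T T\ge 1$ to \eqref{Test} are both correct and match the paper's \eqref{show1}.

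There is, however, a small but genuine slip in the root comparison. Since $d_2<0$, the quadratic-formula label is reversed from the usual convention: $Y_+=(-d_1+\sqrt{d_1^2-2d_0d_2})/d_2$ is the \emph{smaller} real root and $Y_-$ is the \emph{larger} one. The condition ``$Y<Y_-$'' does not guarantee both factors of $(Y-Y_+)(Y-Y_-)$ are nonpositive — it leaves open the regime $Y_+<Y<Y_-$, where the product is negative and $\tfrac12 d_2(Y-Y_+)(Y-Y_-)>0$, breaking the inequality $\partial_+Y\le\tfrac12 d_2 Y^2$. What you actually need is $Y<Y_+$, i.e.\ $Y$ below \emph{both} roots, which is exactly what the paper checks (``$Y(t')-Y_+<0,\ Y(t')-Y_-<0$''). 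The repair is immediate because \eqref{YQE} bounds all of $-2d_1/d_2$, $-2\bar d_1/d_2$, $Y_+$, $Y_-$ below by $-G_T(\bar x')$, so $Y<-G_T$ gives $Y<Y_+\le Y_-$; but you must cite that bound for $Y_+$ (not only for $Y_-$). You also do not address the case $d_1^2-2d_0d_2<0$, where $Y_\pm$ are complex and ``the two factors are negative'' is not literally meaningful; the paper devotes a separate short argument to it using $d_0<0$ together with $Y+2d_1/d_2<0$. The factored form still yields $\tfrac12 d_2(Y-Y_+)(Y-Y_-)=\tfrac12 d_2\,|Y-Y_+|^2<0$ there, so the conclusion survives, but this case should be stated explicitly. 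With these two corrections your argument coincides with the paper's.
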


\begin{proof}
We want to prove that when \eqref{GTcritical} is satisfied, then blowup happens before time $t=T$.

We only have to consider the solution when $t\leq T$, where $T$ satisfies \eqref{Test}.

First, along any $2$-characteristic $\Lambda$ starting from the point $(\bar x',0)$, the estimates on $v$ in Theorems \ref{thm_den1} or \ref{thm_den2} are satisfied, where the region to the right of this $\Lambda$ is the region $\Omega$ defined in \eqref{ome}. One can still use Figure \ref{Fig:domain} as a reference.

 More precisely, at any point $(x',t')$ on the $2$-characteristic $\Lambda$ with $t'<T$, when $1<\gamma<\frac{5}{3}$, Theorem
 \ref{thm_den1}  gives that
\begin{equation*}
v(x',t')\leq v(x',0)+Mt',
\end{equation*}
then using $\eta=\frac{2\sqrt{K\gamma}}{\gamma-1}v^{-\frac{\gamma-1}{2}}$ and \eqref{b_def},  we have
\begin{equation}\label{final}
r^{-1}\eta^{\frac{\gamma-3}{2(\gamma-1)}}(x',t')\leq (r(\bar{x}',0))^{-1}K_\tau^{\frac{\gamma-3}{4}}\big(\bar{v}+MT\big)^{\frac{3-\gamma}{4}}.
\end{equation}\label{final_1}
When $\frac{5}{3}<\gamma<3$, Theorem
 \ref{thm_den2}  gives that
\begin{equation*}
v^{\frac{\gamma-1}{2}}(x',t')\leq v^{\frac{\gamma-1}{2}}(x',0)+\sqrt{K\gamma}K_\tau^{-1}N(\bar x')t',
\end{equation*}
and similarly, by \eqref{b_def}, one obtains
\begin{equation}\label{final_2}
r^{-1}\eta^{\frac{\gamma-3}{2(\gamma-1)}}(x',t')\leq (r(\bar{x}',0))^{-1}K_\tau^{\frac{\gamma-3}{4}}\big(\bar{v}^\frac{\gamma-1}{2}
+\sqrt{K\gamma}K^{-1}_\tau N(\bar x')T\big)^{\frac{3-\gamma}{2(\gamma-1)}}.
\end{equation}


By Theorem \ref{thm_bounds},
\[2C_0\geq u\geq A^{-\frac{\gamma-1}{2}}\eta(r,t),\]
where $A=r^{d-1}$.
Then we have
\begin{equation}\label{l1516}
L_{15}u+L_{16} r^{\frac{(1-\gamma)(d-1)}{2}}\eta\leq 2C_0(L_{15}+L_{16}).
\end{equation}

Based on the above analysis \eqref{final}--\eqref{l1516}, when  $1<\gamma< 3$ and $\gamma\neq \frac{5}{3},$ using \eqref{YQE} and \eqref{GT}, we have
\begin{equation}\label{GTE}
-\frac{2d_1}{d_2},-\frac{2\bar{d}_1}{d_2}>-G_{T}(\bar{x}')\quad{\rm and}\quad Y_\pm,Q_\pm>-G_{T}(\bar{x}').
\end{equation}
Suppose \eqref{GTcritical} is satisfied. 
We want to prove the key estimate
\begin{equation}\label{part_main}
\partial_+Y< \frac{1}{2}d_2Y^2,
\end{equation}
on $\Lambda$ before $t=T$.
In fact, notice that along the 2-characteristic $\Lambda$ starting from $(\bar{x}',0)$, by \eqref{GTE}, we have
\beq\label{5.18}
\partial_+ Y<0\quad \hbox{and}\quad
Y(t')<-G_{T}(\bar{x}'),
\eeq
which in turn implies
\beq\label{Yd1d2}
Y(t')+\frac{2d_1}{d_2}<0.
\eeq

On the part of characteristic when the roots \eqref{Y} do not exist, i.e. when $d_1^2-2d_0d_2<0$, we have $d_0<0$, since $d_2<0$. In view of \eqref{DRIE}, we get
\begin{equation*}\label{D00}
\partial_+ Y=d_0+d_1Y+d_2Y^2<\frac{1}{2}d_2Y(Y+\frac{2d_1}{d_2})+\frac{1}{2}d_2Y^2
<\frac{1}{2}d_2Y^2,
\end{equation*}
by \eqref{Yd1d2}.

On the part of characteristic when the roots \eqref{Y} exist,  we can rewrite the equation $\eqref{DRIE}_1$ as
\begin{equation*}\label{MDRi}
\partial_+Y=\frac{1}{2}d_2(Y-Y_+)(Y-Y_-)+\frac{1}{2}d_2Y^2.
\end{equation*}
By \eqref{5.18} and \eqref{YQE}--\eqref{GT},
\[Y(t')-Y_+<0,\quad Y(t')-Y_-< 0,\]
which implies that
\begin{equation*}\label{part}
\partial_+Y=\frac{1}{2}d_2(Y-Y_+)(Y-Y_-)+\frac{1}{2}d_2Y^2< \frac{1}{2}d_2Y^2,
\end{equation*}
since $d_2<0$. Hence, we proved \eqref{part_main}.

 Integrating \eqref{part_main} in time, we get
\begin{equation}\label{inte1}
\frac{1}{Y(t')}\geq \frac{1}{Y(\bar{x}',0)}-\frac{1}{2}\int_0^{t'}d_2(s)\,ds,
\end{equation}
where the integral is along $\Lambda$. Hence the blowup happens at a time $t'_1$ when the right hand side of \eqref{inte1} equals to zero, that is,
\begin{equation*}
-\frac{1}{Y(\bar{x}',0)}=\frac{1}{2}\int_0^{t'_1}(-d_2)(s)\,ds.
\end{equation*}
In fact, when $1< \gamma<\frac{5}{3}$, we read from the definition of $d_2$ in \eqref{DRi} and Theorem \ref{thm_den1} that
\begin{equation}\label{d2est1}
-d_2\geq \frac{\gamma+1}{4}K_\tau^{-\frac{\gamma+1}{4}}[\bar{v}+Mt']^{-\frac{3-\gamma}{4}}.
\end{equation}
 When $\frac{5}{3}< \gamma<3$, by using Theorem \ref{thm_den2}, we can get
\begin{equation}\label{d2est2}
-d_2\geq \frac{\gamma+1}{4}K_\tau^{-\frac{\gamma+1}{4}}[\bar{v}^{\frac{\gamma-1}{2}}+\sqrt{K\gamma}K_\tau^{-1}N(\bar{x}')t']^{-\frac{3-\gamma}{2(\gamma-1)}}.\end{equation}
Thus, it is clear from the estimates \eqref{d2est1} and \eqref{d2est2} that such a finite time $t'_1$ exists. However, we still need to show that $t'_1<T.$ From \eqref{GTcritical}, we only need to show that
\begin{equation}\label{show1}
\frac{1}{G_{T}(\bar{x}')}\leq \frac{1}{2}\int_0^{T}(-d_2)(s)\,ds.
\end{equation}
When $1< \gamma<\frac{5}{3}$, \eqref{d2est1} gives
 \begin{equation*}
 \begin{split}
 \frac{1}{2}\int_0^{T}(-d_2)(s)\,ds&\geq \frac{\gamma+1}{8}K_\tau^{-\frac{\gamma+1}{4}}\int_0^{T}[\bar{v}+Mt']^{-\frac{3-\gamma}{4}}\,dt\\[2mm]
 & \geq\frac{\gamma+1}{8}K_\tau^{-\frac{\gamma+1}{4}}T(\bar{v}+MT)^{\frac{\gamma-3}{4}}.
\end{split}\end{equation*}
Combining \eqref{GT}  and \eqref{Test}, we then obtain \eqref{show1} for $1< \gamma<\frac{5}{3}$.

When $\frac{5}{3}< \gamma<3$, from \eqref{d2est2}, we obtain
 \begin{equation*}
 \begin{split}
 \frac{1}{2}\int_0^{T}(-d_2)(s)\,ds&\geq\frac{\gamma+1}{8}K_\tau^{-\frac{\gamma+1}{4}}\int_0^{T}[\bar{v}^{\frac{\gamma-1}{2}}+\sqrt{K\gamma}K_\tau^{-1}N(\bar{x}')t']^{-\frac{3-\gamma}{2(\gamma-1)}}\,dt\\[2mm]
 & \geq\frac{\gamma+1}{8}K_\tau^{-\frac{\gamma+1}{4}}T[\bar{v}^{\frac{\gamma-1}{2}}+\sqrt{K\gamma}K_\tau^{-1}N(\bar{x}')T]^{\frac{\gamma-3}{2(\gamma-1)}}.\label{5.23}
\end{split}\end{equation*}
This together with \eqref{GT} and \eqref{Test} imply \eqref{show1} immediately.
This completes the proof of Theorem \ref{thm_main}.
\end{proof}


Finally, we state the singularity formation theorem when the backward initial compression is strong enough. One must assume more initial conditions to make sure that blowup happens before the time when the 1-characteristic including strong compression leaves  $\Omega$ defined in \eqref{ome}. Our main result reads as follows:

\begin{thm}\label{thm_main1}
For $1<\gamma< 3$ and $\gamma\neq \frac{5}{3},$ assume the initial data satisfy conditions in Assumption \ref{asu_1} and
$(\alpha(x',0)$, $\beta(x',0)$, $\tilde{\alpha}(x',0)$, $\tilde{\beta}(x',0))$ are all uniformly bounded in $d=2,3$. For any $C^1$ solution of \eqref{La}, if there exist some $\bar{x}'_0>\bar{x}'>0$ and $T$, with $r(\bar{x}'_0,0)-r(\bar{x}',0)\geq 2C_0\gamma T$ and  $T$ satisfying \eqref{Test},
 such that
\begin{equation}\label{GTcritical1}
 Q(\bar{x}'_0,0)< -G_T(\bar{x}'),
\end{equation}
then classical solution will break down before $t=T$.
\end{thm}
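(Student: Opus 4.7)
The plan is to adapt the proof of Theorem \ref{thm_main} to the backward Riccati variable $Q$, tracking a $1$-characteristic $\Lambda^-$ starting from $(\bar{x}'_0, 0)$ instead of a $2$-characteristic from $(\bar{x}', 0)$. Because the equation for $Q$ in \eqref{DRIE} has exactly the same structure as that for $Y$, once the density lower bound estimates of Section \ref{sec_4} are shown to hold along $\Lambda^-$, the blowup analysis goes through verbatim. The one genuinely new issue is geometric: to invoke Theorems \ref{thm_den1}--\ref{thm_den2} we need $\Lambda^-$ to stay inside the domain of dependence $\Omega$ defined in \eqref{ome} and anchored at $\bar{x}'$, and this is exactly what the separation hypothesis $r(\bar{x}'_0,0)-r(\bar{x}',0) \geq 2C_0\gamma T$ is designed to enforce.

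The first step I would carry out is the geometric confinement. Along any $1$-characteristic, \eqref{inbo3} yields $dr/dt = c_1 = u - h \geq 0$, so the radial coordinate is nondecreasing along $\Lambda^-$; meanwhile the left boundary $2$-characteristic of $\Omega$ grows in $r$ at speed $c_2 = u+h \leq \tfrac{\gamma+1}{2}u \leq (\gamma+1)C_0 \leq 2\gamma C_0$. Combining these two estimates with the separation hypothesis forces $\Lambda^-$ to remain to the right of the left boundary of $\Omega$ for $0 \leq t' \leq T$.

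Once $\Lambda^- \subset \Omega$ is established, the next step is to transfer the $v$ upper bound of Theorem \ref{thm_den1} or \ref{thm_den2} to $\Lambda^-$, using that $r \geq r(\bar{x}',0)$ throughout $\Omega$ by \eqref{b_def}. Combined with \eqref{YQE} and \eqref{l1516}, this yields the analogue of \eqref{GTE},
\[
-\frac{2\bar{d}_1}{d_2} > -G_T(\bar{x}'), \qquad Q_\pm > -G_T(\bar{x}'),
\]
at every point of $\Lambda^-$ with $t' \leq T$. The final step is the Riccati argument for $Q$: hypothesis \eqref{GTcritical1} places $Q(\bar{x}'_0, 0)$ strictly below both roots $Q_\pm$ and below $-2\bar{d}_1/d_2$, and a continuation argument based on $\partial_- Q < 0$ preserves these strict inequalities along $\Lambda^-$. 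As in the proof of Theorem \ref{thm_main} this produces
\[
\partial_- Q < \tfrac{1}{2} d_2 Q^2,
\]
which upon integration along $\Lambda^-$, together with the lower bounds on $-d_2$ from \eqref{d2est1}--\eqref{d2est2} and the time threshold \eqref{Test}, forces $Q \to -\infty$ at some $t'_1 < T$ by the same computation as in \eqref{show1}.

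The main obstacle I expect is the geometric confinement step: it requires combining the a priori Eulerian wave speed bound extracted from Theorem \ref{thm_bounds} with a careful $r$-variable comparison between the $1$-characteristic $\Lambda^-$ and the left boundary $2$-characteristic of $\Omega$, and it is here that the particular constant $2C_0\gamma$ in the hypothesis must be justified. Once $\Lambda^- \subset \Omega$ is secured, the remainder is essentially a rerun of the proof of Theorem \ref{thm_main} with the roles of forward and backward characteristics interchanged.
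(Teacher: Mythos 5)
Your proposal is correct and follows essentially the same route as the paper's proof: both reduce the theorem to the geometric claim that the $1$-characteristic from $(\bar{x}'_0,0)$ stays inside $\Omega$ for $t\leq T$, established by comparing $\int_0^T c_2\,dt$ along the left boundary with $\int_0^T c_1\,dt$ along $\Lambda^-$ and using the $L^\infty$ bounds of Theorem~\ref{thm_bounds} (which give $c_2\leq(\gamma+1)C_0\leq 2\gamma C_0$ and $c_1\geq 0$, yielding the separation threshold $2C_0\gamma T$), and then rerun the $Y$-argument from Theorem~\ref{thm_main} with $Q$ along $\Lambda^-$, using $r\geq r(\bar x',0)$ throughout $\Omega$ so that $Q_\pm,-2\bar d_1/d_2>-G_T(\bar x')$. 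The only cosmetic difference is that the paper bounds the difference of the two wave-speed integrals directly while you bound $c_2$ and $c_1$ separately; the content is identical, and the paper additionally records the monotonicity $G_T(\bar x')\geq G_T(\bar x)$ for $\bar x\geq\bar x'$, which is implicit in your use of $\min_\Omega r=r(\bar x',0)$.
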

\begin{proof}
We want to prove that singularity formation happens before time $t=T$ when $\eqref{GTcritical1}$ is satisfied.

We only need to check that before time $t=T$, the 1-characteristic starting from $\bar x_0$ will not reach the left boundary of $\Omega$ defined in \eqref{ome}. This is clear because $r(\bar{x}'_0,0)-r(\bar{x}',0)\geq 2C_0\gamma T$, and in the Euler coordinates
\begin{equation*}
\begin{split}
\int_0^Tc_2\,dt-\int_0^Tc_1\,dt&\leq \int_0^T(u+\sqrt{K\gamma}\rho^\frac{\gamma-1}{2})\,dt+\int_0^T\sqrt{K\gamma}\rho^\frac{\gamma-1}{2}\,dt\\
&\leq 2C_0T+2(\gamma-1)C_0 T=2C_0\gamma T
\end{split}\end{equation*}
by Theorem \ref{thm_bounds}, here the first integration of the above inequality is along  the characteristic $\frac{dr}{dt}=c_2$ from $0$ to $T$, the second integration is along the characteristic $\frac{dr}{dt}=c_1$ from $0$ to $T$,  See Figure \ref{Fig:5}.
 \begin{figure}[h!]
\centering
 \scalebox{0.7}{\includegraphics{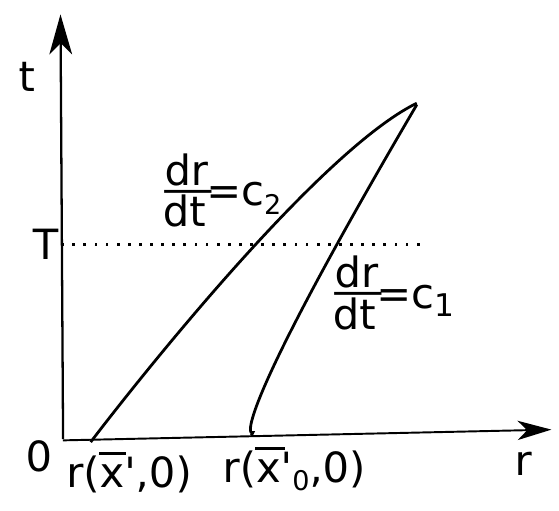}}
\caption{Proof of Theorem \ref{thm_main1}.  }\label{Fig:5}
\normalsize
\end{figure}

Using similar arguments as in Theorem \ref{thm_main}, one can prove this theorem, where we also use the fact that
$ -G_T(\bar{x}')\leq -G_T(\bar{x})$ for any $\bar{x}\geq \bar{x}'$.

\end{proof}

\subsection{Singularity formation for $\gamma=\frac{5}{3}$}\label{subsub_2}
Our present goal is to understand wether the finite time shock formation result also holds when $\gamma=\frac{5}{3}$. As before, we define
\begin{equation}\label{YQ1}
\begin{split}
&Y_1:=\eta^2\alpha-\frac{1}{6K_c}A^{-1}\dot{A}u\eta^{-1}-\frac{1}{3K_c}A^{-\frac{4}{3}}\dot{A}\ln\eta,\\[2mm]
&Q_1:=\eta^2\beta-\frac{1}{6K_c}A^{-1}\dot{A}u\eta^{-1}+\frac{1}{3K_c}A^{-\frac{4}{3}}\dot{A}\ln\eta.
\end{split}
\end{equation}
which satisfy the equations
\begin{equation*}\label{DRi1}
\begin{cases}
\partial_+Y_1=\hat{d}_0+\hat{d}_1Y_1+d_2Y^2_1,\\[2mm]
\partial_-Q_1=\tilde{d}_0+\tilde{d}_1Q_1+d_2Q^2_1,
\end{cases}
\end{equation*}
with coefficients
\begin{equation*}
\begin{split}
d_2&=-2K_c \eta,\\
\hat{d}_1&=\hat{L}_1A^{-1}\dot{A}u+\hat{L}_2A^{-\frac{4}{3}}\dot{A}\eta\ln\eta+\hat{L}_3A^{-\frac{4}{3}}\dot{A}\eta,\\
\hat{d}_0&=\hat{L}_4A^{-2}\dot{A}^2u^2\eta^{-1}+\hat{L}_5A^{-\frac{7}{3}}\dot{A}^2u\ln\eta+\hat{L}_6A^{-\frac{7}{3}}\dot{A}^2u+\hat{L}_7A^{-\frac{8}{3}}\dot{A}^2\eta\ln^2\eta\\
&\quad+\hat{L}_8A^{-\frac{8}{3}}\dot{A}^2\eta\ln\eta+\hat{L}_{9}A^{-\frac{8}{3}}\dot{A}^2\eta+\hat{L}_{10}A^{-1}\ddot{A}u^2\eta^{-1}+\hat{L}_{11}A^{-\frac{4}{3}}\ddot{A}u\\
&\quad+\hat{L}_{12}A^{-\frac{4}{3}}\ddot{A}u\ln\eta+\hat{L}_{13}A^{-\frac{5}{3}}\ddot{A}\eta+\hat{L}_{14}A^{-\frac{5}{3}}\ddot{A}\eta\ln\eta,\\
\tilde{d}_1&=\tilde{L}_1A^{-1}\dot{A}u+\tilde{L}_2A^{-\frac{4}{3}}\dot{A}\eta\ln\eta+\tilde{L}_3A^{-\frac{4}{3}}\dot{A}\eta,\\
\tilde{d}_0&=\tilde{L}_4A^{-2}\dot{A}^2u^2\eta^{-1}+\tilde{L}_5A^{-\frac{7}{3}}\dot{A}^2u\ln\eta+\tilde{L}_6A^{-\frac{7}{3}}\dot{A}^2u+\tilde{L}_7A^{-\frac{8}{3}}\dot{A}^2\eta\ln^2\eta\\
&\quad+\tilde{L}_8A^{-\frac{8}{3}}\dot{A}^2\eta\ln\eta+\tilde{L}_{9}A^{-\frac{8}{3}}\dot{A}^2\eta+\tilde{L}_{10}A^{-1}\ddot{A}u^2\eta^{-1}+\tilde{L}_{11}A^{-\frac{4}{3}}\ddot{A}u\\
&\quad+\tilde{L}_{12}A^{-\frac{4}{3}}\ddot{A}u\ln\eta+\tilde{L}_{13}A^{-\frac{5 }{3}}\ddot{A}\eta+\tilde{L}_{14}A^{-\frac{5}{3}}\ddot{A}\eta\ln\eta,\\
\end{split}
\end{equation*}
where $\hat{L}_j$ and $\tilde{L}_j$ are constants. The roots of  $\hat{d}_0+\hat{d}_1Y_1+\frac{1}{2}d_2Y^2_1=0$ and $\tilde{d}_0+\tilde{d}_1Q_1+\frac{1}{2}d_2Q^2_1$, if they exist, are given by the quadratic formula
\begin{equation*}
Y_{1,\pm}=\frac{-\hat{d}_1\pm\sqrt{\hat{d}_1^2-2\hat{d}_0d_2}}{d_2}
\quad{\rm and }\quad
Q_{1,\pm}=\frac{-\tilde{d}_1\pm\sqrt{\tilde{d}_1^2-2\tilde{d}_0\bar{d}_2}}{d_2}.\end{equation*}
Now, applying the same argument as in \eqref{YQE}, \eqref{final}--\eqref{GTE}, for any classical solutions  in $[0,T)$ satisfying initial conditions in Assumption \ref{asu_1},  in view of Theorem \ref{thm_den1}, we can find some positive constants $\hat{L}_{15},\hat{L}_{16}, \hat{L}_{17}$ depending only on $d$, such that, when $\gamma=\frac{5}{3}$,
\begin{equation*}
-\frac{2\hat{d}_1}{d_2},-\frac{2\tilde{d}_1}{d_2},Y_{1,\pm},Q_{1,\pm}\geq -\eta^{-1}[\hat{L}_{15}r^{-1}u+\hat{L}_{16}r^{\frac{-2-d}{3}}\eta+\hat{L}_{17} r^{\frac{-2-d}{3}}\eta\ln\eta]
\end{equation*}
and we define that
\beq\label{YQE1}
G_{\gamma,T}(\bar{x}')= 2C_0K_\tau^{-\frac{1}{3}}(r(\bar{x}',0))^{-1}(\bar v+MT)^{\frac{1}{3}}[\hat{L}_{15}+\hat{L}_{16}+2C_0\hat{L}_{17}r(\bar{x}',0)^{\frac{d-1}{3}}]\\[2mm]
\end{equation}
with
\[\displaystyle\bar{v}=\frac{1}{\displaystyle\min_{x'\in(\bar{x}',+\infty)}A(r(x',0))\rho(x',0)}.\]
and the upper bound $T$ on blowup time, which satisfies
\begin{equation}\label{Test1}
T\geq \frac{3r(\bar{x}',0)K_\tau}{2C_0(\hat{L}_{15}+\hat{L}_{16}+2C_0\hat{L}_{17}r(\bar{x}',0)^{\frac{d-1}{3}})}.
\end{equation}

For $\gamma=\frac{5}{3}$, we have the following gradient blowup results. The proof is similar to Theorem \ref{thm_main} and \ref{thm_main1}, we omit it here for brevity.

\begin{thm}\label{thm_main2}
For $\gamma=\frac{5}{3}$, assume the initial data satisfy conditions in Assumption \ref{asu_1} and $(\alpha(x',0)$, $\beta(x',0))$ are both uniformly bounded in $d=2,3$. Assume one of the following two conditions holds
for $G_{\gamma,T}(\bar{x}')$ defined in \eqref{YQE1}:

(1) There exist some $\bar{x}'>0$ and $T$ satisfying \eqref{Test1}, such that,
\begin{equation*}
Y_1(\bar{x}',0)<-G_{\gamma,T}(\bar{x}'). \end{equation*}

(2)  There exist some $\bar{x}'_0>\bar{x}'>0$ and $T$, with $r(\bar{x}'_0,0)-r(\bar{x}',0)\geq 2C_0\gamma T$  and $T$ satisfying \eqref{Test1}, such that,
\begin{equation*}
Q_1(\bar{x}'_0,0)<-G_{\gamma,T}(\bar{x}').
\end{equation*}

Then the $C^1$ solution of \eqref{La} will break down before time $T$.
\end{thm}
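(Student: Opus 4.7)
The plan is to execute, for $Y_1$ and $Q_1$, the same Riccati-inequality strategy that proved Theorem \ref{thm_main} and Theorem \ref{thm_main1}. The only structural difference is that at the exceptional exponent $\gamma=5/3$ the defining expressions \eqref{YQ1} carry logarithmic corrections and the Riccati coefficients $\hat d_0,\tilde d_0$ contain $\ln\eta$ terms; everything else—the sign $d_2=-2K_c\eta<0$, the splitting into discriminant cases, and the final integration—carries over verbatim.

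For case (1), I would work along the 2-characteristic $\Lambda$ starting at $(\bar x',0)$ inside the domain of dependence $\Omega$ from \eqref{ome}, on which $r\geq r(\bar x',0)$ by \eqref{b_def}. Since $\gamma=5/3$ falls in the range covered by Theorem \ref{thm_den1}, one has $v\leq\bar v+Mt'$ on $\Lambda$, which via $\eta=K_\tau^{1/3}v^{-1/3}$ gives $\eta\geq K_\tau^{1/3}(\bar v+MT)^{-1/3}$. Combined with $u\leq 2C_0$ and the supersonic bound $\eta\leq 2C_0\, r^{(d-1)/3}$ from Theorem \ref{thm_bounds}, these estimates—together with the uniform bound on $\ln\eta$ they entail on $\Omega$—plugged into the inequality
\begin{equation*}
-\frac{2\hat d_1}{d_2},\ Y_{1,\pm}\ \geq\ -\eta^{-1}\bigl[\hat L_{15}r^{-1}u+\hat L_{16}r^{(-2-d)/3}\eta+\hat L_{17}r^{(-2-d)/3}\eta\ln\eta\bigr]
\end{equation*}
recover exactly $-2\hat d_1/d_2,\ Y_{1,\pm}>-G_{\gamma,T}(\bar x')$ on $\Lambda$ before $t=T$, matching \eqref{YQE1}.

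Given the hypothesis $Y_1(\bar x',0)<-G_{\gamma,T}(\bar x')$, the proof then follows \eqref{5.18}--\eqref{part_main}: in the regime where $\hat d_1^2-2\hat d_0d_2<0$ one has $\hat d_0<0$ since $d_2<0$, and in the opposite regime the roots $Y_{1,\pm}$ exist and lie above $Y_1$, so in either case $\partial_+Y_1<\tfrac12 d_2Y_1^2$ along $\Lambda$, with $Y_1$ remaining below $-G_{\gamma,T}(\bar x')$ by monotonicity. Integrating yields $1/Y_1(t')\geq 1/Y_1(\bar x',0)-\tfrac12\int_0^{t'}d_2(s)\,ds$, and the lower bound $-d_2=2K_c\eta\geq 2K_cK_\tau^{1/3}(\bar v+MT)^{-1/3}$ combined with the explicit form \eqref{YQE1} of $G_{\gamma,T}$ shows that the hypothesis \eqref{Test1} on $T$ is precisely what is required to make the right-hand side vanish before $t=T$, forcing blowup on $[0,T)$.

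Case (2) is the $Q_1$ analogue along the 1-characteristic $\bar\Lambda$ from $(\bar x'_0,0)$. The only additional point is to verify that $\bar\Lambda$ remains inside $\Omega$ up to time $T$, so that the lower bound on $\eta$ from Theorem \ref{thm_den1} still applies: this reuses the Eulerian-distance estimate from the proof of Theorem \ref{thm_main1}, where the hypothesis $r(\bar x'_0,0)-r(\bar x',0)\geq 2C_0\gamma T$ together with $\int_0^T c_2\,dt-\int_0^T c_1\,dt\leq 2C_0\gamma T$ keeps $\bar\Lambda$ separated from the left boundary 2-characteristic of $\Omega$; monotonicity of $G_{\gamma,T}$ in its argument lets the hypothesis $Q_1(\bar x'_0,0)<-G_{\gamma,T}(\bar x')$ drive the $Q_1$-Riccati at $\bar x'_0$. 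The main obstacle, and the only genuine departure from the $\gamma\neq 5/3$ proof, is the uniform control of the $\ln\eta$ terms appearing in both \eqref{YQ1} and in $\hat d_0,\tilde d_0$; these are tamed precisely because $\eta$ is bounded above and below by strictly positive constants on $\Omega$, so $\ln\eta$ contributes only an additive constant, which is absorbed into the coefficient $\hat L_{17}$ in the definition \eqref{YQE1} of $G_{\gamma,T}$.
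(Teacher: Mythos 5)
Your proposal follows exactly the strategy the paper itself intends for this case: the authors state ``The proof is similar to Theorem \ref{thm_main} and \ref{thm_main1}, we omit it here for brevity,'' and your route --- work along the 2-characteristic $\Lambda$ through $(\bar x',0)$ (respectively the 1-characteristic through $(\bar x'_0,0)$ for case (2)), use Theorem~\ref{thm_den1} to obtain $v\le\bar v+Mt'$ and hence $\eta\ge K_\tau^{1/3}(\bar v+MT)^{-1/3}$ on $\Lambda$, use $u\le 2C_0$ and $\eta\le 2C_0\,r^{(d-1)/3}$ to derive $Y_{1,\pm},\,-2\hat d_1/d_2>-G_{\gamma,T}(\bar x')$ matching \eqref{YQE1}, then run the $\partial_+Y_1<\tfrac12 d_2Y_1^2$ Riccati inequality and integrate, with \eqref{Test1} chosen precisely so that $\tfrac12\int_0^T(-d_2)\,ds\ge 1/G_{\gamma,T}(\bar x')$ --- reproduces that argument faithfully. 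The Eulerian-distance estimate for case (2) is also the right way to keep $\bar\Lambda$ inside $\Omega$.

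There is, however, one misstatement in the closing paragraph that is worth correcting, since it concerns the only step that genuinely differs from the $\gamma\neq 5/3$ proof. You assert that the $\ln\eta$ terms are harmless because ``$\eta$ is bounded above and below by strictly positive constants on $\Omega$.'' This is not true: the supersonic bound gives $\eta\le 2C_0\,r^{(d-1)/3}$, which grows without bound as $r\to\infty$, and $\Omega$ is unbounded in $r$. The actual mechanism is the elementary inequality $\ln\eta\le\eta$ (harmlessly trivial when $\ln\eta\le 0$), combined with the factor $1\le K_\tau^{-1/3}(\bar v+MT)^{1/3}\eta$ coming from the density lower bound, and then $\eta^2\le 4C_0^2 r^{2(d-1)/3}$ together with the decaying prefactor $r^{(-2-d)/3}$. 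One then uses $r\ge r(\bar x',0)$ in $\Omega$; this is what produces the extra factor $2C_0\,r(\bar x',0)^{(d-1)/3}$ multiplying $\hat L_{17}$ in \eqref{YQE1}, rather than ``$\ln\eta$ contributing an additive constant because $\eta$ is uniformly bounded.'' Your end conclusion is right, but the stated reason is not, and if taken at face value it would also (wrongly) suggest that one could take $\ln\eta\le C$ for a fixed constant $C$, which is false on all of $\Omega$. With that correction, the argument closes the theorem.
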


\subsection{Singularity formation for $\gamma=3$}
At last, we show the finite time blowup of solutions
when $\gamma=3$. Toward this goal, we define
\begin{equation}\label{YQ2}
\begin{split}
&Y_2:=\eta\alpha+\frac{1}{2K_c}A^{-1}\dot{A}u\ln\eta-\frac{2}{K_c}A^{-2}\dot{A}\eta
+\frac{1}{2K_c}A^{-2}\dot{A}\eta\ln\eta,\\[2mm]
&Q_2:=\eta\beta+
\frac{1}{2K_c}A^{-1}\dot{A}u\ln\eta+\frac{2}{K_c}A^{-2}\dot{A}\eta-\frac{1}{2K_c}A^{-2}\dot{A}\eta\ln\eta,
\end{split}
\end{equation}
which satisfy the equations
\begin{equation*}\label{DRi2}
\begin{cases}
\partial_+Y_2=\check{d}_0+\check{d}_1Y_2+d_2Y^2_2,\\[2mm]
\partial_-Q_2=\underline{d}_0+\underline{d}_1Q_2+d_2Q^2_2,
\end{cases}
\end{equation*}
with coefficients
\begin{equation*}
\begin{split}
d_2&=-K_c,\\
\check{d}_1&=\check{L}_1A^{-1}\dot{A}u+\check{L}_2A^{-1}\dot{A}u\ln\eta+\check{L}_3A^{-2}\dot{A}\eta+\check{L}_4A^{-2}\dot{A}\eta\ln\eta,\\
\check{d}_0&=\check{L}_5A^{-2}\dot{A}^2u^2\ln\eta+\check{L}_{6}A^{-2}\dot{A}^2u^2\ln^2\eta+\check{L}_7A^{-3}\dot{A}^2u\eta+\check{L}_8A^{-3}\dot{A}^2u\eta\ln\eta\\
&\quad+\check{L}_{9}A^{-3}\dot{A}^2u\eta\ln^2\eta+\check{L}_{10}A^{-4}\dot{A}^2\eta^2+\check{L}_{11}A^{-4}\dot{A}^2\eta^2\ln\eta+\check{L}_{12}A^{-4}\dot{A}^2\eta^2\ln^2\eta\\
&\quad+\check{L}_{13}A^{-2}\ddot{A}u\eta\ln\eta+
\check{L}_{14}A^{-2}\ddot{A}u\eta+
\check{L}_{15}A^{-3}\ddot{A}\eta^2\ln\eta+
\check{L}_{16}A^{-3}\ddot{A}\eta^2+
\check{L}_{17}A^{-1}\ddot{A}u^2\ln\eta,\\
\underline{d}_1&=\underline{L}_1A^{-1}\dot{A}u+\underline{L}_2A^{-1}\dot{A}u\ln\eta+\underline{L}_3A^{-2}\dot{A}\eta+\underline{L}_4A^{-2}\dot{A}\eta\ln\eta,\\
\underline{d}_0&=\underline{L}_5A^{-2}\dot{A}^2u^2\ln\eta+\underline{L}_{6}A^{-2}\dot{A}^2u^2\ln^2\eta+\underline{L}_7A^{-3}\dot{A}^2u\eta+\underline{L}_8A^{-3}\dot{A}^2u\eta\ln\eta\\
&\quad+\underline{L}_{9}A^{-3}\dot{A}^2u\eta\ln^2\eta+\underline{L}_{10}A^{-4}\dot{A}^2\eta^2+\underline{L}_{11}A^{-4}\dot{A}^2\eta^2\ln\eta+\underline{L}_{12}A^{-4}\dot{A}^2\eta^2\ln^2\eta\\
&\quad+\underline{L}_{13}A^{-2}\ddot{A}u\eta\ln\eta
+\underline{L}_{14}A^{-2}\ddot{A}u\eta
+\underline{L}_{15}A^{-3}\ddot{A}\eta^2\ln\eta
+\underline{L}_{16}A^{-3}\ddot{A}\eta^2
+\underline{L}_{17}A^{-1}\ddot{A}u^2\ln\eta,\\
\end{split}
\end{equation*}
where $\check{L}_j$ and $\underline{L}_j$ are constants. The roots of $\check{d}_0+\check{d}_1Y_2+\frac{1}{2}d_2Y^2_2=0$ and $\underline{d}_0+\underline{d}_1Q_2+\frac{1}{2}d_2Q^2_2=0$, if they exist, are given by the quadratic formula
\begin{equation*}\label{Y2}
Y_{2,\pm}=\frac{-\check{d}_1\pm\sqrt{\check{d}_1^2-2\check{d}_0d_2}}{d_2}
\quad{\rm and }\quad
Q_{2,\pm}=\frac{-\underline{d}_1\pm
\sqrt{\underline{d}_1^2-2\underline{d}_0d_2}}{d_2}.\end{equation*}
In a similar  procedure as in \eqref{YQE}, it is easy to see that there exists a positive constant $\check{L}_{18}$ depending only on $d$, such that
\begin{equation*}
-\frac{2\check{d}_1}{d_2},-\frac{2\underline{d}_1}{d_2},Y_{2,\pm},Q_{2,\pm}\geq -\check{L}_{18}(r^{-1}u+r^{-1}u\ln\eta+ r^{-d}\eta +r^{-d}\eta\ln\eta).
\end{equation*}
By using Theorem \ref{thm_bounds}, when $\gamma=3$, it holds that
\[2C_0\geq u\geq A^{-\frac{\gamma-1}{2}}\eta(r,t)=r^{1-d}\eta,\]
where $A=r^{d-1}$. We finally obtain
\begin{equation}\label{YQE2}
\begin{split}
-\frac{2\check{d}_1}{d_2},-\frac{2\underline{d}_1}{d_2},Y_{2,\pm},Q_{2,\pm} &\geq-4C_0\check{L}_{18}\big[r(\bar{x}',0)^{-1}+2C_0 r(\bar{x}',0)^{\frac{d-3}{2}}\big]\\
&=:-H(\bar{x}'),
\end{split}\end{equation}
along any 1 or 2 characteristic starting from $(\bar{x}',0)$, on which $r$ always increases with respective to time so large than $r(\bar{x}',0)$.
Here we have used the fact that $\ln\eta\leq \eta^{\frac{1}{2}}$.

Now we come to prove the singularity formation result  for $\gamma=3$: Theorem \ref{thm_main3}. Since $H(\bar{x}')$ is independent of time $T$, Theorem \ref{thm_main3} is much stronger than Theorem \ref{thm_2.4}.

\begin{proof}
Suppose that \eqref{Hcritical} holds.  Without loss of generality, we assume that
$Y_2(\bar{x}',0)<-H(\bar{x}').$ The case when $Q_2(\bar{x}',0)<-H(\bar{x}')$ is similar.
Now consider the  $2$-characteristic $\Lambda$ starting at $(\bar{x}',0)$.

Similar as in Theorem \ref{thm_main}, we have
\begin{equation*}
\partial_+Y_2< -\frac{1}{2}K_cY^2_2
\end{equation*}
on $\Lambda$ for any time before blowup, since $H(\bar{x}')$ is independent of time. Integrating it in time, we get
\begin{equation}\label{inte2}
\frac{1}{Y_2(t')}\geq \frac{1}{Y_2(\bar{x}',0)}+\frac{1}{2}\int_0^{t'}K_c\,ds,
\end{equation}
where the integral is along $\Lambda$. Since $K_c$ is a positive constant, the right hand side of \eqref{inte2} approaches zero in finite time. This implies that $Y_2(t')$ approaches $-\infty$ in finite time.
%
This completes the proof of Theorem \ref{thm_main3}.

\end{proof}

\section*{Acknowledgments}
The work of HC is partially supported by the National Natural Science Foundation of China (No.11801295)  and the Shandong Provincial Natural Science Foundation, China (No.ZR2018BA008). The work of GC is partially supported by  NSF with grant DMS-1715012.
The work of Tian-Yi Wang is partially supported by  NSFC grant 11601401 and 11971024.

\begin{bibdiv}
\begin{biblist}




	\bib{Alinhac1}{article}{
	author={Alinhac, S.},
	title={Temps de vie des solutions r\'{e}guli\`{e}res des \'{e}quations d'Euler compressibles axisym\'{e}triques en dimension deux},
	journal={Invent. Math.},
	volume={11},
	date={1993},
	number={3},
	pages={627--670},
}
\bib{Alinhac-book1}{book}{
	author={Alinhac, S.},
	title={Blowup for nonlinear hyperbolic equations},
	series={Progress in Nonlinear Differential Equations and their
		Applications},
	volume={17},
	publisher={Birkh\"{a}user Boston, Inc., Boston, MA},
	date={1995},
	pages={xiv+113},
}

\bib{Alinhac2}{article}{
	author={Alinhac, S.},
	title={Blowup of small data solutions for a quasilinear wave equation in
		two space dimensions},
	language={English, with English and French summaries},
	journal={Ann. of Math. (2)},
	volume={149},
	date={1999},
	number={1},
	pages={97--127},
}

\bib{Alinhac3}{article}{
	author={Alinhac, S.},
	title={Blowup of small data solutions for a class of quasilinear wave
		equations in two space dimensions. II},
	journal={Acta Math.},
	volume={182},
	date={1999},
	number={1},
	pages={1--23},
	
}

\bib{G3}{article}{
   author={Chen, G.},
   title={Formation of singularity and smooth wave propagation for the
   non-isentropic compressible Euler equations},
   journal={J. Hyperbolic Differ. Equ.},
   volume={8},
   date={2011},
   number={4},
   pages={671--690},
}
\bib{G9}{article}{
   author={Chen, G.},
   title={Optimal time-dependent lower bound on density for classical
   solutions of 1-D compressible Euler equations},
   journal={Indiana Univ. Math. J.},
   volume={66},
   date={2017},
   number={3},
   pages={725--740},
}
\bib{CCZ}{article}{
   author={Chen, G.},
   author={Chen, G.Q.},
   author={Zhu, S.G.},
   title={Formation of singularities and existence of global continuous solutions for the compressible Euler equations},
   journal={preprint, available at arXiv:1905.07758},
}

\bib{CPZ}{article}{
   author={Chen, G.},
   author={Pan, R.H.},
   author={Zhu, S.G.},
   title={Singularity formation for the compressible Euler equations},
   journal={SIAM J. Math. Anal.},
   volume={49},
   date={2017},
   number={4},
   pages={2591--2614},
}

\bib{G5}{article}{
   author={Chen, G.},
   author={Young, R.},
   title={Smooth solutions and singularity formation for the inhomogeneous
   nonlinear wave equation},
   journal={J. Differential Equations},
   volume={252},
   date={2012},
   number={3},
   pages={2580--2595},
}

\bib{G6}{article}{
   author={Chen, G.},
   author={Young, R.},
   title={Shock-free solutions of the compressible Euler equations},
   journal={Arch. Ration. Mech. Anal.},
   volume={217},
   date={2015},
   number={3},
   pages={1265--1293},
}

\bib{G8}{article}{
   author={Chen, G.},
   author={Young, R.},
   author={Zhang, Q.T.},
   title={Shock formation in the compressible Euler equations and related
   systems},
   journal={J. Hyperbolic Differ. Equ.},
   volume={10},
   date={2013},
   number={1},
   pages={149--172},
}

\bib{Chen-1}{article}
{author={Chen, G.-Q.},
	title={ Remarks on spherically symmetric solutions of the compressible Euler equations},
	journal={Proc. Roy. Soc. Edinburgh Sect. A}
	volume={127},
	date={1997}
	number={2}
	page={243--259}, }

\bib{cp1}{article}{
   author={Chen, G.-Q.},
   author={Perepelitsa, M.},
   title={Vanishing viscosity solutions of the compressible Euler equations
   with spherical symmetry and large initial data},
   journal={Comm. Math. Phys.},
   volume={338},
   date={2015},
   number={2},
   pages={771--800},
}

\bib{Chenshuxing}{article}{
   author={Chen, S.X.},
   author={Dong, L.M.},
   title={Formation and construction of shock for p-system},
   journal={Science in China},
   volume={44},
   date={2001},
   number={9},
   pages={1139--1147},
}

\bib{Chenshuxing2}{article}{
   author={Chen, S.X.},
   author={Xin, Z.P.},
   author={Yin, H.C.},
   title={Formation and construction of shock wave for quasilinear hyperbolic system and its application to inviscid compressible flow},
   journal={Research Report, IMS, CUHK},
   date={1999},
}

\bib{Chris}{book}{
   author={Christodoulou, D.},
   title={The shock development problem},
   publisher={arXiv:1705.00828},
   date={2017},
}

\bib{shuang}{book}{
   author={Christodoulou, D.},
   author={Miao, S.},
   title={Compressible flow and Euler's equations},
  series={Surveys of Modern Mathematics},
      volume={9},
   publisher={International Press, Somerville, MA; Higher Education Press, Beijing},
   date={2014},
   pages={x+iv+583},
}

\bib{courant}{book}{
   author={Courant, R.},
   author={Friedrichs, K. O.},
   title={Supersonic Flow and Shock Waves},
   publisher={Interscience Publishers, Inc., New York, N. Y.},
   date={1948},
   pages={xvi+464},
}
\bib{Dafermos2010}{book}{
   author={Dafermos, Constantine M.},
   title={Hyperbolic conservation laws in continuum physics},
   series={Grundlehren der Mathematischen Wissenschaften [Fundamental
   Principles of Mathematical Sciences]},
   volume={325},
   publisher={Springer-Verlag, Berlin},
   date={2000},
   pages={xvi+443},
}
\bib{jenssen}{article}{
   author={Jenssen, Helge K.},
   title={On exact solutions of rarefaction-rarefaction interactions in
   compressible isentropic flow},
   journal={J. Math. Fluid Mech.},
   volume={19},
   date={2017},
   number={4},
   pages={685--708},
   issn={1422-6928},
}
\bib{Fritzjohn}{article}{
   author={John, F.},
   title={Formation of Singularities in One-Dimensional
   Nonlinear Wave Propagation},
   journal={Comm. Pure Appl. Math.},
   volume={27},
   date={1974},
   pages={377--405},
}
\bib{Kongdexing}{article}{
   author={Kong, D.X.},
   title={Formation and propagation of Singularities for $2\times 2$ quasilinear hyperbolic systems},
   journal={Tran.  Amer. Math. Soci.},
   volume={354},
   date={2002},
   pages={3155--3179},
}

\bib{Klainerman}{article}{
	author={Klainerman, S.},
	title={Long time behaviour of solutions to nonlinear wave equations},
	conference={
		title={Nonlinear variational problems},
		address={Isola d'Elba},
		date={1983},
	},
	book={
		series={Res. Notes in Math.},
		volume={127},
		publisher={Pitman, Boston, MA},
	},
	date={1985},
	pages={65--72},,
}

\bib{lfw}{article}{
   author={LeFloch, Philippe G.},
   author={Westdickenberg, M.},
   title={Finite energy solutions to the isentropic Euler equations with
   geometric effects},
   language={English, with English and French summaries},
   journal={J. Math. Pures Appl. (9)},
   volume={88},
   date={2007},
   number={5},
   pages={389--429},
}

\bib{lax2}{article}{
   author={Lax, Peter D.},
   title={Development of singularities of solutions of nonlinear hyperbolic
   partial differential equations},
   journal={J. Mathematical Phys.},
   volume={5},
   date={1964},
   pages={611--613},
}
\bib{Leiduzhang}{article}{
   author={Lei, Z.},
   author={Du, Y.},
   author={Zhang, Q.T.},
   title={Singularities of solutions to compressible Euler equations with
   vacuum},
   journal={Math. Res. Lett.},
   volume={20},
   date={2013},
   number={1},
   pages={41--50},
}

\bib{Li-book1}{book}{
	author={Li, T.(Daqian)},
	author={W.C. Yu},
	title={Boundary value problem for quasilinear hyperbolic systems},
	publisher={Duke University},
	date={1985},
	pages={XXXXXXX},
}

\bib{Li-book2}{book}{
	author={Li, T.(Daqian)},
	title={Global classical solutions for quasilinear hyperbolic system},
	publisher={John Wiley and Sons},
	date={1994},
	pages={XXXXXXX},
}

\bib{Lidaqian}{article}{
   author={Li, T.(Daqian)},
   author={Zhou, Y.},
   author={Kong, De-Xing},
   title={Global classical solutions
   for general quasilinear hyperbolic systems with decay initial data},
   journal={Nonlinear Analysis, Theory, Methods $\&$ Applications},
   volume={28},
   date={1997},
   number={8},
   pages={1299-1332},

}
\bib{lin2}{article}{
   author={Lin, L.W.},
   title={On the vacuum state for the equations of isentropic gas dynamics},
   journal={J. Math. Anal. Appl.},
   volume={121},
   date={1987},
   number={2},
   pages={406--425},
}

\bib{liu}{article}{
   author={Liu, T.P.},
   title={Development of singularities in the nonlinear waves for
   quasilinear hyperbolic partial differential equations},
   journal={J. Differential Equations},
   volume={33},
   date={1979},
   number={1},
   pages={92--111},
}
\bib{ls}{article}{
   author={Liu, T. P.},
   author={Smoller, J. A.},
   title={On the vacuum state for the isentropic gas dynamics equations},
   journal={Adv. in Appl. Math.},
   volume={1},
   date={1980},
   number={4},
   pages={345--359},
}
\bib{mmu}{article}{
   author={Makino, T.},
   author={Mizohata, K.},
   author={Ukai, S.},
   title={The global weak solutions of compressible Euler equation with
   spherical symmetry},
   journal={Japan J. Indust. Appl. Math.},
   volume={9},
   date={1992},
   number={3},
   pages={431--449},
}
\bib{mmu2}{article}{
   author={Makino, T.},
   author={Mizohata, K.},
   author={Ukai, S.},
   title={Global weak solutions of the compressible Euler equation with
   spherical symmetry. II},
   journal={Japan J. Indust. Appl. Math.},
   volume={11},
   date={1994},
   number={3},
   pages={417--426},
}
\bib{mt}{article}{
   author={Makino, T.},
   author={Takeno, S.},
   title={Initial-boundary value problem for the spherically symmetric
   motion of isentropic gas},
   journal={Japan J. Indust. Appl. Math.},
   volume={11},
   date={1994},
   number={1},
   pages={171--183},
}

\bib{Makino}{article}{
   author={Makino,T.},
      author={Ukai,S.},
      author={Kawashima,S. },
         title={Sur la solution $\grave{\text{a}}$ support compact de equations d'Euler compressible},
   journal={Japan. J. Appl. Math.},
   volume={33},
   date={1986},
   pages={249--257},
}

\bib{Speck}{article}{
   author={Luk,J.},
   author={Speck,J.},
   title={Shock formation in solutions to the 2D compressible Euler equations in the presence of non-zero vorticity},
   journal={Inventiones Mathematicae},
   volume={214},
   date={2018},
   pages={1-169},
}
\bib{Riemann}{article}{
   author={Riemann, B.},
   title={Ueber die Fortpflanzung ebener Luftwellen von endlicher Schwingungsweite},
   journal={Abhandlungen der Kniglichen Gesellschaft der Wissenschaften zu Gottingen},
   volume={8},
   date={1860},
   pages={43},
}
\bib{Rammaha}{article}{
   author={Rammaha, Mohammad A.},
   title={Formation of singularities in compressible fluids in two space dimension},
   journal={Proc. Amer. Math. Soc.},
   volume={107},
   date={1989},
   pages={705-714},
}

\bib{Sideris}{article}{
   author={Sideris,Thomas C.},
   title={Formation of singulirities in three-dimensional compressible fluids},
   journal={Comm.Math.Phys.},
   volume={101},
   date={1985},
   pages={475-487},
}

	\bib{Sideris1}{article}{
	author={Sideris,Thomas C.},
	title={Delay singularity formation in 2D compressible flow},
	journal={Amer. J. Math.},
	volume={119},
	date={1997},
	number={2},
	pages={371--422},
}
\bib{smoller}{book}{
   author={Smoller, J.},
   title={Shock waves and reaction-diffusion equations},
   series={Grundlehren der Mathematischen Wissenschaften [Fundamental
   Principles of Mathematical Science]},
   volume={258},
   publisher={Springer-Verlag, New York-Berlin},
   date={1983},
   pages={xxi+581},
}
\bib{stokes}{article}{
   author={Stokes, G.G.},
   title={On a difficulty in the theory of sound},
   journal={Philos. Magazine},
   volume={33},
   date={1848},
   number={3},
   pages={349-356},
}
\bib{TW}{article}{
   author={Tadmor, E.},
   author={Wei, D.M.},
   title={On the global regularity of subcritical Euler-Poisson equations
   with pressure},
   journal={J. Eur. Math. Soc. (JEMS)},
   volume={10},
   date={2008},
   number={3},
   pages={757--769},
}

\end{biblist}
\end{bibdiv}

\end{document}